\numberwithin{equation}{section}
\newtheorem{thm}{Theorem}[section]
\newtheorem{prop}[thm]{Proposition}
\newtheorem{lem}[thm]{Lemma}
\newtheorem{dfn}[thm]{Definition}
\newtheorem{example}[thm]{Example}
\newtheorem{remark}[thm]{Remark}
\newtheorem{cor}[thm]{Corollary}
\newtheorem{note}[thm]{Note}
\numberwithin{equation}{section}
\newcommand{\C}{\mathbb{C}}
\newcommand{\N}{\mathbb{N}}
\newcommand{\Q}{\mathbb{Q}}
\newcommand{\Z}{\mathbb{Z}}
\newcommand{\f}{\mathbf{f}}
\newcommand{\SL}{\mathrm{SL}}
\newcommand{\mcO}{\mathcal{O}}
\newcommand{\mfc}{\mathfrak{c}}
\newcommand{\mfH}{\mathfrak{H}}
\newcommand{\mfm}{\mathfrak{m}}
\newcommand{\mfp}{\mathfrak{p}}
\def\1{1\!\!1}
\newcommand{\psmat}[4]{\bigl( \begin{smallmatrix} #1 & #2 \\ #3 & #4 \end{smallmatrix} \bigr)}
\def\dis{\displaystyle}
\title{On non-vanishing of the Fourier coefficients of primitive forms}
\author[T. Dalal]{Tarun Dalal}
\address[T. Dalal]{Department of Mathematics, Indian Institute of Technology Hyderabad, Kandi, Sangareddy 502285, INDIA.}
\email{ma17resch11005@iith.ac.in}
\author[N. Kumar]{Narasimha Kumar}
\address[N. Kumar]{Department of Mathematics, Indian Institute of Technology Hyderabad, Kandi, Sangareddy 502285, INDIA.}
\email{narasimha.kumar@iith.ac.in}
\date{\today}
\begin{document}
\begin{abstract}
In this semi-expository article, we discuss about the non-vanishing of the Fourier coefficients of primitive forms.
We shall  make a note of a discrepancy in the statement of~\cite[Lemma 2.2]{KRW07}.
\end{abstract}
\maketitle


\section{Introduction}
\label{Introduction}
In $1947$, Lehmer conjectured that Ramanujan's tau function $\tau(n)$ is non-vanishing for all $n$. In~\cite{Leh47}, he
proved that the smallest $n$ for which $\tau(n)= 0$ must be a prime and showed that $\tau(n) \neq 0$ for all $n<33,16,799$.
It is well-known that the Fourier coefficients of Ramanujan's Delta function $\Delta(z)$ are in fact $\tau(n)(n \in \N)$. Note that
$\Delta(z)$ is a cuspidal Hecke eigenform of weight $12$ and level $1$.
It is a natural question to ask if a similar phenomenon continue to hold for cusp forms of higher weight and higher level.

In this semi-expository article, we study  the non-vanishing of the Fourier coefficients of primitive forms 
of any weight and any level. We take this opportunity to make a correction in the statement of~\cite[Lemma 2.2]{KRW07}.

\section{Preliminary}
In this section, we shall define modular forms and recall some basic facts about them. For more details, we refer the reader to
consult~\cite{DS05},~\cite{Miyake06}.

\subsection{Congruence subgroups}

The modular group $\SL_2(\Z)$ is defined by
$$\SL_2(\Z) := \left\{ \psmat{a}{b}{c}{d} : a,b,c,d \in \Z, ad- bc = 1 \right \}.$$ 
For any $N \in \N$, we shall define a subgroup of $\SL_2(\mathbb{Z})$ 
by 
$$\Gamma(N) = \{ \gamma \in \SL_2(\mathbb{Z}) \mid \gamma \equiv \psmat{1}{0}{0}{1} \pmod N\}.$$

\begin{dfn}
We say that a subgroup $\Gamma$ of $\SL_2(\mathbb{Z})$ is a congruence subgroup, if $\Gamma$ contains $\Gamma(N)$ 
for some $N\in \mathbb{N}$. 
\end{dfn}
In this theory, the following congruence subgroups play an important role
\begin{align*}
    \Gamma_1(N) &= \{ \gamma \in \SL_2(\mathbb{Z}) \mid \gamma \equiv \psmat{1}{*}{0}{1} \pmod N\}, \\ 
    \Gamma_0(N) &= \{ \gamma \in \SL_2(\mathbb{Z}) \mid \gamma \equiv \psmat{*}{*}{0}{*} \pmod N\} \ \ \ for\ any\ N \in \N.
\end{align*}
The subgroup $\Gamma(N)$ is called the principal congruence subgroup of $\SL_2(\mathbb{Z})$. 
Note that $\Gamma(N) \leq \Gamma_1(N)\leq \Gamma_0(N)\leq \SL_2(\mathbb{Z}),$ and
$\Gamma(1) = \Gamma_1(1)= \Gamma_0(1) = \SL_2(\mathbb{Z}).$

The modular group $\SL_2(\mathbb{Z})$ acts on the complex upper half plane 
$\mfH = \{ \tau \in \mathbb{C} \mid \mathrm{Im}(\tau) >0 \}$ via 
$$\gamma\tau = \frac{a\tau + b}{c\tau + d},$$ 
where $\tau \in \mfH, \ \gamma = \psmat{a}{b}{c}{d}\in \SL_2(\Z).$
For more details, please refer to~\cite[\S 1.2]{DS05}.
\subsection{Modular forms}
%
%
%

In this section, we shall define modular forms and recall some results related to them.

Let $X$ be the space of all complex valued holomorphic functions on $\mfH$. 
We can define an action of $\SL_2(\Z)$ on $X$ by using the action of $\SL_2(\Z)$ on $\mfH$
as follows.
For any $k\in \N$, $f\in X$ and $\gamma \in \SL_2(\mathbb{Z})$,
we define  the slash operator 
$$(f |_k \gamma )(\tau) := (c\tau + d)^{-k} f(\gamma\tau), \ \tau\in \mfH,$$
where $\gamma=\psmat{a}{b}{c}{d}$. Now, we define the notion of modular forms for any congruence subgroup $\Gamma$ of $\SL_2(\Z)$.
\begin{dfn}
Let $\Gamma$ be a congruence subgroup of $\SL_2(\mathbb{Z})$. A function $f \in X$ is said to be a \textbf{modular form} of weight $k$ with respect to $\Gamma$ if 
\begin{enumerate}
\item $f|_k \gamma = f , \forall \gamma \in \Gamma $, \ 
\item $f|_k\alpha$ is holomorphic at $\infty$, $\forall \alpha \in \SL_2(\mathbb{Z})$. 
\end{enumerate}
\end{dfn}
\begin{remark}
Note that one needs to verify condition $(2)$ only for the representatives of distinct cosets of $\Gamma$ in $\SL_2(\Z)$.  
\end{remark}
Now, we explain the meaning of $f$ being holomorphic at $\infty$. From condition $(1)$, it is clear that
then $f$ will be $h\Z$-periodic, where $h$ is the smallest integer such that $\psmat{1}{h}{0}{1}\in \Gamma$ (such $h$ exists since $\Gamma(N)\leq \Gamma$). 
This implies that there exists a function $g : D^{\prime} \longrightarrow \mathbb{C}$,  where $D^{\prime}$ is unit puncture disk, 
such that $f(\tau)= g(q_h)$ for all $\tau \in \mfH$, where $q_h=e^{\frac{2\pi i \tau}{h}}$.
It is clear that, the function $g$ is holomorphic on $D^{\prime}$, since $f$ is so on $\mfH$. 
The function $f$ is said to be \textbf{holomorphic at $\infty$} if $g$ extends holomorphically to $q=0$.
Similarly, one can define the meaning of $f|_k\alpha$ being holomorphic at $\infty.$ For more details, please refer to~\cite[\S 1.1, \S 1.2]{DS05}.

We denote the space of all modular forms of weight $k$ and level $\Gamma$ by $M_k(\Gamma)$.

\subsection{Fourier expansion}
Let $f \in M_k(\Gamma)$. Let $h$ be the smallest integer such that $\psmat{1}{h}{0}{1}\in \Gamma$.
Since $f$ is holomorphic at $\infty$, then $f$ has a Fourier expansion 
$$f(\tau) = \sum_{n=0}^\infty a_f(n)q_h^n, \ \ \mathrm{where} \ \ q_h= e^{\frac{2\pi i \tau}{h}} $$
for $\tau \in \mfH$.

\begin{dfn}
Let $f \in M_{k}(\Gamma)$. We say that $f$ is  a \textbf{cusp form} if  $a_{f|_k\alpha}(0)= 0$ for all $\alpha \in \SL_2(\mathbb{Z})$. 
We denote the space of all cusp forms of weight $k$ and level $\Gamma$ by $S_k(\Gamma)$. 
\end{dfn}
Note that $M_k(\Gamma), S_k(\Gamma)$ are vector spaces over $\C$. 
By~\cite[Theorem 3.5.1 and Theorem 3.6.1]{DS05}, these are in fact finite dimensional vector spaces over $\C$.
Now, we shall give some examples of modular forms and cusp forms.

\begin{example}
For any $k \geq 2$, we define the Eisenstein series of weight $2k$
 $$G_{2k}(\tau) = \sum_{(c,d) \in \mathbb{Z}^2 -\{(0,0)\}} \frac{1}{(c\tau + d)^{2k}}\in M_{2k}(\SL_2(\mathbb{Z})).$$
\end{example}
It is easy to check that $G_{2k}$ is a modular form of weight $2k$ and level $1$ (cf.~\cite[Page 4]{DS05}). 
The Fourier expansion of $G_{2k}$ at $\infty$ is given by 
\begin{equation}
 G_{2k}(\tau) = 2\zeta(2k) + 2 \frac{(2\pi i )^{2k}}{(2k-1)!}\sum_{n=1}^\infty \sigma_{2k-1}(n)q^n, \ \ k \geq 1,
\end{equation}
where $\sigma_{2k-1}(n)= \sum_{m\mid n, m>0} m^{2k-1}$. The normalized Eisenstein series is defined by 
$E_{2k}(\tau):= \frac{G_{2k}(\tau)}{2\zeta(2k)}$. Therefore, the Fourier expansion of $E_{2k}$ at $\infty$ is given by
$$E_{2k}(\tau)= 1- \frac{4k}{B_{2k}}\sum_{n=1}^\infty \sigma_{2k-1}(n)q^n, $$
where $B_k$'s are the Bernoulli numbers (cf.~\cite[Page 10]{DS05}).  

\begin{example}
From the dimensions of $S_k(\SL_2(\Z))$, one can see that $12$ is the least integer for which there is a non-zero cusp form for $\SL_2(\Z)$.
Moreover, dimension of $S_{12}(\SL_2(\Z))$ is $1$ and it is spanned by 
$$  \Delta(z) = (60G_4(z))^3 - 27(140G_6(z))^2 \in S_{12}(\SL_2(\mathbb{Z})), \ \ z\in \mfH.$$
\end{example}
The product formula for $\Delta(z)$ is given by
$\dis \Delta(z)= q\prod_{n\geq 1}(1-q^n)^{24}=\sum_{n \geq 1} \tau(n)q^n,$
where $q=e^{2\pi i z}$.

\begin{example}[\cite{Shi71}, Example 2.28]
 For $N\in \{2,3,5,11 \}$, $(\Delta(z)/\Delta(Nz))^{1/(N+1)}\in S_{24/(N+1)}(\Gamma_0(N))$. 
 Moreover, the space $S_{24/(N+1)}(\Gamma_0(N))$ is one dimensional and it is spanned by $(\Delta(z)/\Delta(Nz))^{1/(N+1)}$.
 
\end{example}

\subsection{Modular forms with character}

A Dirichlet character modulo $N$ is a group homomorphism $\chi : (\mathbb{Z}/N\mathbb{Z})^* \longrightarrow \mathbb{C}^*$.

\begin{dfn}
The space of all modular forms of weight $k$ level $N$ with character $\chi$ is defined by 
$$M_k(N,\chi) = \{ f \in M_k(\Gamma_1(N)) | f|_k\psmat{a}{b}{c}{d} = \chi(d)f,\forall \psmat{a}{b}{c}{d} \in \Gamma_0(N)\}.$$
\end{dfn}
The space $M_k(\Gamma_1(N))$ decomposes as  $$M_k(\Gamma_1(N))= \bigoplus_{\chi}M_k(N,\chi), $$
where $\chi$ varies over all Dirichlet characters of $(\Z/N\Z)^*$ such that $\chi(-1)= (-1)^k$ (cf. \cite[Lemma 4.3.1]{Miyake06}).
Similarly one can define the space of cusp forms of weight $k$ level $N$ with character $\chi$ and they are denoted by $S_k(N,\chi)$.
One can easily check that 
 $S_k(N,\chi)  =  S_k(\Gamma_1(N)) \cap M_k(N,\chi) .$
Moreover, a similar decomposition holds as well, i.e.,  
$$S_k(\Gamma_1(N))= \bigoplus_{\chi} S_k(N,\chi), $$
where $\chi$ varies over all Dirichlet characters of $(\Z/N\Z)^*$ with $\chi(-1)= (-1)^k$ (cf. \cite[Lemma 4.3.1]{Miyake06}).

\begin{example}[Poincar\'e series]
 Let $\Gamma_{\infty}=\{ \psmat{1}{b}{0}{1} \mid b\in \Z \}$, and $\chi$ be any Dirichlet character modulo $N$.
 For $m\geq 1$, we define 
 \begin{equation*}
   P_m(z) := \sum_{\gamma=\psmat{a}{b×}{c×}{d×}\in \Gamma_{\infty} \char`\\ \Gamma_0(N)} \overline{\chi}(\gamma)\frac{1}{(c z+d)^k}\mathrm{exp}(2\pi i m\gamma z).
 \end{equation*}
for any integer $k\geq 2$. By~\cite[Proposition 14.1]{IK04}, $P_m(z) \in S_k(N,\chi).$ 
\end{example}

Now we will define two types of operators on the space of modular forms (resp., cusp forms).
They are known as Hecke operators.
\subsection{Hecke operators}
Let $M_k(\Gamma_1(N))$ be a space of modular forms of weight $k$, level $N$. For any $(n,N)=1$, we define 
the \textbf{diamond operator} $$\langle n \rangle:M_k(\Gamma_1(N)) \longrightarrow M_k(\Gamma_1(N))$$ as
$$\langle n \rangle f := f|_k\alpha, \mathrm \ {for\ any} \ \alpha=\psmat{a}{b}{c}{\delta}\in \Gamma_0(N) \ \mathrm{with} \ \delta\equiv n\pmod N. $$
 We can also extend the definition of diamond operator to $\N$ via $\langle n \rangle = 0$ if $(n,N)>1$.
Observe that for any character $\chi : (\mathbb{Z}/N\mathbb{Z})^* \longrightarrow \mathbb{C}^*,$
$$M_k(N,\chi) = \{ f \in M_k(\Gamma_1(N)) | \langle n \rangle f = \chi(n)f,\forall n\in (\mathbb{Z}/N\mathbb{Z})^*\}.$$
Note that, the diamond operator acts trivially on $M_k(\Gamma_0(N))$, since $M_k(\Gamma_0(N))=M_k(N,\chi^{\circ}_N)$,
where $\chi^\circ_N$ is the trivial character modulo $N$.

Now, we will define the second type of \textbf{Hecke operator} for any prime $p$, 
and they are denoted by $T_p.$
If $f(\tau) =   \sum_{n=0}^\infty a_f(n)q^n \in M_k(\Gamma_1(N))$, then 
$$  (T_pf)(\tau) = \sum_{n=0}^\infty a_f(np)q^n + \chi^\circ_N(p)p^{k-1}\sum_{n=0}^\infty a_{\langle p \rangle f}(n)q^{np} \in M_k(\Gamma_1(N)). $$
 Similarly, one can also defined the action of $T_p$ on $M_k(N,\chi)$ as follows:
If $f(\tau) =   \sum_{n=0}^\infty a_f(n)q^n \in M_k(N, \chi)$, then 
$$  (T_pf)(\tau) = \sum_{n=0}^\infty a_f(np)q^n + \chi(p)p^{k-1}\sum_{n=0}^\infty a_f(n)q^{np} \in M_k(N, \chi),$$
In fact, for $n\in \N$, one can define the Hecke operators $T_n$ as follows: 
\begin{enumerate}
  \item For any prime $p$ and $r\geq 2$ we define $T_{p^r} = T_pT_{p^{r-1}} - p^{k-1}\chi(p)T_{p^{r-2}}.$
   \item  For $n=p_1^{e_1}\ldots p_k^{e_k}$ we define $T_n=T_{p_1^{e_1}}\ldots T_{p_k^{e_k}}. $
\end{enumerate}
One can check that, any two primes $p \neq q$, $T_pT_q=T_qT_p$.
In fact, the Hecke operators respects the spaces $S_k(N,\chi)$ and $S_k(\Gamma_0(N))$. For more details, we refer the reader to 
~\cite[\S 5.3]{DS05}.

\subsection{Petersson inner product}
To study the space of cusp forms $S_k(\Gamma_1(N))$ further, we make it into an inner product  space.
In order to do so, we need to define an inner product on the space of cusp forms.

The \textbf{hyperbolic measure} on the upper half plane is defined by $$d\mu(\tau):= \frac{dxdy}{y^2}, \ \tau = x+iy \in \mfH.$$
For any congruence subgroup $\Gamma \leq \SL_2(\mathbb{Z})$, 
the \textbf{Petersson inner product} 
$$\langle ,\rangle _\Gamma:S_k(\Gamma)\times S_k(\Gamma)\longrightarrow \mathbb{C}$$ 
is given by $$ \langle f,g \rangle_\Gamma=\frac{1}{V_\Gamma}\int_{\Gamma \char`\\ \mfH}f(\tau)\overline{g(\tau)}(\mathrm{Im}(\tau))^kd\mu(\tau), \ \mathrm{where} \ V_\Gamma = \int_{\Gamma \char`\\ \mfH}d\mu(\tau).$$
This inner product is linear in $f$, conjugate linear in $g$, Hermitian symmetric and positive definite.
By~\cite[Theorem 5.5.3]{DS05}, the Hecke operators $\langle n \rangle$ and $T_n$ are normal operators for $(n,N)=1$.
By~\cite[Theorem 5.5.4]{DS05}, we have that
\begin{thm}
The space $S_k(\Gamma_1(N))$ has an orthogonal basis of simultaneous eigenforms for the Hecke operators $\{\langle n \rangle, T_n: (n,N)=1 \}.$
\end{thm}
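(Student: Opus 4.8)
The plan is to obtain this as a consequence of the finite-dimensional spectral theorem for a commuting family of normal operators. Recall from~\cite[Theorem 3.6.1]{DS05} that $S_k(\Gamma_1(N))$ is a finite-dimensional vector space over $\C$, so that together with the Petersson inner product $\langle\,,\,\rangle_{\Gamma_1(N)}$ it becomes a finite-dimensional Hilbert space. Write $\mathcal{T}=\{\langle n\rangle,\,T_n:(n,N)=1\}$ for the family of Hecke operators under consideration.

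First I would verify that $\mathcal{T}$ is a commuting family. The diamond operators commute with one another since $(\Z/N\Z)^{*}$ is abelian; the operators $T_n$ commute with one another by the defining relations $T_{p^r}=T_pT_{p^{r-1}}-p^{k-1}\langle p\rangle T_{p^{r-2}}$ together with $T_pT_q=T_qT_p$ for distinct primes; and each $\langle n\rangle$ commutes with each $T_m$ (all indices coprime to $N$). Next, by~\cite[Theorem 5.5.3]{DS05} every member of $\mathcal{T}$ is a normal operator on $S_k(\Gamma_1(N))$, and in fact for $(n,N)=1$ one has the explicit adjoint formulas $\langle n\rangle^{*}=\langle n\rangle^{-1}$ and $T_n^{*}=\langle n\rangle^{-1}T_n$ with respect to the Petersson inner product. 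Hence the adjoint of each operator in $\mathcal{T}$ again lies in the $\C$-algebra $\mathcal{A}$ generated by $\mathcal{T}$, so $\mathcal{A}$ is a commutative, $*$-closed algebra all of whose elements are normal; as $S_k(\Gamma_1(N))$ is finite-dimensional, $\mathcal{A}$ is finitely generated, so only finitely many operators really need to be handled.

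The main step is the simultaneous diagonalization, which I would carry out by induction on $\dim_{\C} S_k(\Gamma_1(N))$. Choose any non-scalar $S\in\mathcal{A}$; by the spectral theorem for a single normal operator, $S_k(\Gamma_1(N))$ is the orthogonal direct sum of the eigenspaces of $S$, each of strictly smaller dimension (unless $S$ was scalar, in which case one passes to another generator, and if all generators are scalar the statement is trivial). Every such eigenspace is invariant under all of $\mathcal{A}$, because its elements commute with $S$, and also under their adjoints, because $\mathcal{A}$ is $*$-closed; since the restriction of a normal operator to a subspace invariant under both the operator and its adjoint is again normal, the inductive hypothesis applies on each eigenspace. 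This yields an orthogonal decomposition of $S_k(\Gamma_1(N))$ into simultaneous eigenspaces for the whole family $\mathcal{T}$; picking an orthogonal basis inside each simultaneous eigenspace and taking the union gives the desired orthogonal basis of simultaneous eigenforms.

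The point I expect to require the most care — the crux of the argument — is precisely that $\mathcal{T}$ is not merely a commuting family of normal operators but one whose adjoints remain inside the commutative algebra it generates; this is exactly what the identities $\langle n\rangle^{*}=\langle n\rangle^{-1}$ and $T_n^{*}=\langle n\rangle^{-1}T_n$ supply, and it is what makes the inductive restriction to eigenspaces legitimate. Everything else is the standard linear-algebra packaging of the spectral theorem, and one should also be careful to invoke finite-dimensionality at the outset so that the induction is well-founded.
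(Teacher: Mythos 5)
Your proposal is correct and follows essentially the same route as the paper, which offers no independent argument but simply invokes \cite[Theorem 5.5.4]{DS05} after noting via \cite[Theorem 5.5.3]{DS05} that the operators $\langle n\rangle$ and $T_n$ with $(n,N)=1$ are normal with respect to the Petersson inner product. The cited theorem is proved by exactly your argument: the adjoint relations $\langle n\rangle^{*}=\langle n\rangle^{-1}$ and $T_n^{*}=\langle n\rangle^{-1}T_n$ make the commuting family $*$-closed, and the finite-dimensional spectral theorem then yields an orthogonal basis of simultaneous eigenforms.
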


Now, we shall introduce the theory of old forms and new forms. This in fact leads to define the notion of primitive forms.
(cf.~\cite[\S 5.4]{DS05} for more discussion on this).

\subsection{Old forms and New forms}
For $d|N$, we define the mapping $$i_d: (S_k(\Gamma_1(Nd^{-1})))^2 \longrightarrow S_k(\Gamma_1(N)) \ by\ $$
$$(f,g) \longrightarrow f + g|_k\psmat{d}{0}{0}{1}.$$
The space of \textbf{old forms} is defined by $$  S_k(\Gamma_1(N))^{\mathrm{old}} = \sum_{p|N} i_p((S_k(\Gamma_1(Np^{-1})))^2).$$
The space of \textbf{new forms} (denote by $S_k(\Gamma_1(N))^{\mathrm{new}}$) is defined to be the orthogonal complement of $S_k(\Gamma_1(N))^{\mathrm{old}}$ with respect to the Petersson inner product.
By~\cite[Proposition 5.6.2]{DS05}, we see that the spaces $S_k(\Gamma_1(N))^{\mathrm{old}}$ and 
$S_k(\Gamma_1(N))^{\mathrm{new}}$ are stable under the action of $T_n$
and $\langle n \rangle$ for all $n \in \N$.
\begin{dfn}
A primitive form is a normalized eigenform in $f \in S_k(\Gamma_1(N))^{\mathrm{new}}$, i.e.,
$f$ is an eigenform for the Hecke operators $T_n,\langle n \rangle$ for all $n\in \mathbb{N}$, and
$a_f(1)= 1$.
\end{dfn}
By~\cite[Theorem 5.8.2]{DS05}, the set of primitive forms in the space $S_k(\Gamma_1(N))^{\mathrm{new}}$ forms an orthogonal basis. 
Each such primitive form lies in an eigen space $S_k(N,\chi)$ for an unique character $\chi$. In fact, its Fourier coefficients are its $T_n$-eigenvalues.
\begin{note}
When we say that $f \in S_{k}(N,\chi)$ is a primitive form of weight $k$, level $N$, with character $\chi$, 
actually we mean $f\in S_k(\Gamma_1(N))^{\mathrm{new}}$ is a primitive form and it belongs the eigenspace $S_k(N,\chi)$.
\end{note}


\begin{prop}[\cite{DS05}, Proposition 5.8.5]
\label{properties}
Let $f = \sum_{n=1}^\infty a_f(n)q^n \in S_k(N,\chi)$. Then $f$ is a normalized eigenform 
if and only if 
its Fourier coefficients satisfy the following relations
\begin{enumerate}
\item $a_f(1)=1$,
\item $a_f(m) a_f(n) = a_f(m)a_f(n)$ if $(m,n)=1$,
\item $a_f(p^r) = a_f(p) a_f(p^{r-1}) - p^{k-1}\chi(p)a_f(p^{r-2})$, for all prime $p$ and $r\geq 2$.
\end{enumerate}
\end{prop}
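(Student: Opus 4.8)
The plan is to reduce the statement to the explicit action of the Hecke operators on Fourier coefficients. First I would record the basic formula: from the description of $T_p$ given above, together with the recursions $T_{p^r} = T_p T_{p^{r-1}} - p^{k-1}\chi(p) T_{p^{r-2}}$ and $T_n = \prod_i T_{p_i^{e_i}}$ and the commutativity of the $T_p$'s, one checks by induction that for $f = \sum_{n\geq 1} a_f(n) q^n \in S_k(N,\chi)$ and every $m \geq 1$
$$ a_{T_m f}(n) \;=\; \sum_{d \mid (m,n)} \chi(d)\, d^{k-1}\, a_f\!\left( \frac{mn}{d^{2}} \right), \qquad n \geq 1; $$
in particular $a_{T_m f}(1) = a_f(m)$. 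I will also use two elementary facts: an element of $S_k(N,\chi)$ is determined by its $q$-expansion at $\infty$, and every $f \in S_k(N,\chi)$ automatically satisfies $\langle n \rangle f = \chi(n) f$ for all $n$, by the definition of $S_k(N,\chi)$. Hence, to show $f$ is a normalized eigenform it is enough to show $a_f(1)=1$ and that $T_m f$ is a scalar multiple of $f$ for every $m$.

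For the forward implication, let $f$ be a normalized eigenform and write $T_m f = \lambda_m f$. Since $a_f(1) = 1$ we have $f \neq 0$, and comparing coefficients of $q^1$ gives $\lambda_m = \lambda_m a_f(1) = a_{T_m f}(1) = a_f(m)$; this is (1), and it shows $T_m f = a_f(m) f$ for every $m$. The multiplicative structure of the operators now passes to the eigenvalues: $T_n = \prod_i T_{p_i^{e_i}}$ gives $T_{mn} = T_m T_n$ whenever $(m,n) = 1$, and comparing eigenvalues yields (2); applying the recursion for $T_{p^r}$ to $f$ and reading off eigenvalues yields (3).

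For the converse, assume (1)--(3); the goal is $T_m f = a_f(m) f$ for all $m$. The crucial case is a single prime $p$, where I would show $T_p f = a_f(p) f$ directly: by the formula for $T_p$ above, $a_{T_p f}(n) = a_f(pn) + \chi(p) p^{k-1} a_f(n/p)$ (with $a_f(n/p) = 0$ when $p \nmid n$), and writing $n = p^{b} n_0$ with $p \nmid n_0$ and expanding each term by the multiplicativity (2), the desired identity $a_{T_p f}(n) = a_f(p) a_f(n)$ becomes
$$ a_f(n_0)\bigl( a_f(p^{b+1}) + \chi(p) p^{k-1} a_f(p^{b-1}) \bigr) = a_f(n_0)\, a_f(p)\, a_f(p^{b}), $$
which holds by (3) for $b \geq 1$ and by (1) for $b = 0$. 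Thus $T_p f$ and $a_f(p) f$ have the same $q$-expansion, so $T_p f = a_f(p) f$. An induction on $r$ via $T_{p^r} = T_p T_{p^{r-1}} - p^{k-1}\chi(p) T_{p^{r-2}}$ and (3) upgrades this to $T_{p^r} f = a_f(p^r) f$ for every prime power, and then $T_n f = \prod_i T_{p_i^{e_i}} f = \bigl( \prod_i a_f(p_i^{e_i}) \bigr) f = a_f(n) f$ by (2). Combined with $\langle n \rangle f = \chi(n) f$ and $a_f(1) = 1$, this says $f$ is a normalized eigenform.

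The routine parts are the forward direction and the inductions from prime powers to arbitrary $n$; the only place calling for care is the single-prime step of the converse, namely checking that the bookkeeping with $n = p^{b} n_0$ turns ``$a_{T_p f}(n) = a_f(p) a_f(n)$ for all $n$'' into precisely the two-term relation (3), with the degenerate exponent $b = 0$ and the ramified primes $p \mid N$ (where $\chi(p) = 0$, so (3) reads $a_f(p^r) = a_f(p) a_f(p^{r-1})$) handled separately. I do not anticipate a genuine obstacle once the coefficient formula for $T_m$ is in hand --- from there everything is formal.
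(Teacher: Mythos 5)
The paper does not prove this proposition at all: it is quoted verbatim from the cited reference \cite[Proposition 5.8.5]{DS05}, so there is no internal argument to compare against. Your proof is correct and is essentially the standard one from that reference --- the coefficient formula $a_{T_m f}(n)=\sum_{d\mid (m,n)}\chi(d)d^{k-1}a_f(mn/d^2)$ (in particular $a_{T_m f}(1)=a_f(m)$) for the forward direction, and the single-prime verification $T_p f=a_f(p)f$ via $n=p^b n_0$ followed by the recursion and multiplicativity for the converse, with the $b=0$ and $\chi(p)=0$ cases handled correctly. One incidental point: item (2) as printed in the paper is a tautology (a typo for $a_f(mn)=a_f(m)a_f(n)$ when $(m,n)=1$), and you rightly proved the intended multiplicativity statement.
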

For more details on this content, please refer to~\cite[\S 5.7, \S 5.8]{DS05}.
\section{Classical modular forms}
Recall that, Lehmer proved that the smallest $n$ for which $\tau(n)= 0$ must be a prime. 
We are interested in studying a similar question for the Fourier coefficients of primitive forms of higher weight and higher level. 
Let $f = \sum_{n=1}^{\infty} a_f(n)q^n \in S_{k}(N,\chi)$ be a primitive form of even weight $k$, level $N$, with character $\chi$.	

Suppose that $a_f(n)=0$ for some $n = \prod_i p_i^{r_i} \geq 1$.
By Proposition~\ref{properties}, we see that $a_f(p_i^r)=0$
for some prime $p_i$. 
In this section, we shall explore the relation between the vanishing (resp., non-vanishing) of $a_f(p)$
and $a_f(p^r)$ for $r\geq 2$.
We begin this discussion with a lemma of Kowalski, Robert, and Wu (see \cite[Lemma 2.2]{KRW07}).
\begin{prop}
\label{lemma2.2}
Let $f = \sum_{n=1}^{\infty} a_f(n)q^n \in S_{k}(N,\chi)$ be a primitive form of even weight $k$, level $N$, with character $\chi$.
There exists an integer $M_f \geq 1$,  such that for any prime $p \not \mid M_f$, either $a_f(p)=0$ or $a_f(p^r)\neq 0$ for all $r\geq 1$. 
\end{prop}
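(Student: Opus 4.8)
The plan is to exploit the three-term recurrence in Proposition~\ref{properties}(3) together with the fact that, for $p \nmid N$, the Hecke polynomial $X^2 - a_f(p) X + p^{k-1}\chi(p)$ has two roots $\alpha_p, \beta_p$ with $\alpha_p \beta_p = p^{k-1}\chi(p) \neq 0$, so in particular $\beta_p \neq 0$. First I would recall the closed form of the recurrence: when $\alpha_p \neq \beta_p$ one has
\begin{equation*}
a_f(p^r) = \frac{\alpha_p^{r+1} - \beta_p^{r+1}}{\alpha_p - \beta_p},
\end{equation*}
and when $\alpha_p = \beta_p$ one has $a_f(p^r) = (r+1)\alpha_p^r$. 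In the equal-root case, $a_f(p) = 2\alpha_p$, so either $a_f(p) = 0$ (then $\alpha_p = 0$, forcing $p^{k-1}\chi(p) = \alpha_p\beta_p = 0$, impossible for $p \nmid N$, so this subcase does not occur) or $a_f(p) \neq 0$, $\alpha_p \neq 0$, and $a_f(p^r) = (r+1)\alpha_p^r \neq 0$ for every $r \geq 1$. Thus the equal-root primes already satisfy the dichotomy with no exceptions. The content is therefore entirely in the distinct-root case, where $a_f(p^r) = 0$ is equivalent to $(\alpha_p/\beta_p)^{r+1} = 1$, i.e. to $\alpha_p/\beta_p$ being a nontrivial root of unity of order dividing $r+1$.

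Next I would bring in the key arithmetic input: $\alpha_p$ and $\beta_p$ are algebraic integers lying in the number field $K_f = \Q(\{a_f(n)\}_n)$ (or a fixed quadratic extension thereof), and $K_f$ has bounded degree, say $d = [K_f : \Q]$, independent of $p$. If $\alpha_p/\beta_p$ is a root of unity, it is a root of unity lying in a field of degree at most $2d$ over $\Q$; by the standard fact that $\Q(\zeta_m)$ has degree $\varphi(m)$, there are only finitely many possible orders $m$, and hence only finitely many possible values of the ratio $\zeta := \alpha_p/\beta_p$. Call this finite set of admissible roots of unity $R$, and let $m_0 = \mathrm{lcm}$ of their orders. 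The crucial consequence is: if $a_f(p^r) = 0$ for \emph{some} $r \geq 1$, then $\alpha_p/\beta_p \in R$, and then $a_f(p^r) = 0$ precisely for those $r$ with $(r+1)$ divisible by the order of $\alpha_p/\beta_p$; in particular $a_f(p)$ itself is then $\neq 0$ unless the order is $2$. So I must separately rule out, for all but finitely many $p$, the case where $\alpha_p/\beta_p = -1$, i.e. $a_f(p) = \alpha_p + \beta_p = 0$ but $a_f(p^2) = \alpha_p^2 + \alpha_p\beta_p + \beta_p^2 = \alpha_p\beta_p - \alpha_p^2 \neq 0$ — wait, that \emph{is} consistent with $a_f(p)=0$, which is the first alternative in the statement, so order $2$ is harmless. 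The only genuinely bad primes are those with $\alpha_p/\beta_p$ of order $m \geq 3$ in $R$, for then $a_f(p) \neq 0$ yet $a_f(p^{m-1}) = 0$.

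Finally I would define $M_f$: start with $M_f = N$ (to exclude ramified primes), and then argue that for each root of unity $\zeta \in R$ of order $m \geq 3$, the condition $\alpha_p/\beta_p = \zeta$ forces a constraint that only finitely many $p$ can satisfy — the cleanest route is to note that $\alpha_p/\beta_p = \zeta$ together with $\alpha_p\beta_p = p^{k-1}\chi(p)$ gives $\beta_p^2 = p^{k-1}\chi(p)\zeta^{-1}$, whence $a_f(p)^2 = (\alpha_p+\beta_p)^2 = \beta_p^2(\zeta+1)^2 = p^{k-1}\chi(p)\,(\zeta+1)^2\zeta^{-1}$, i.e. $a_f(p)^2 = p^{k-1}\chi(p)(\zeta + 2 + \zeta^{-1})$; since $|a_f(p)| \leq 2p^{(k-1)/2}$ by Deligne's bound this is not an immediate contradiction, so instead I would observe that $\zeta + 2 + \zeta^{-1} = 2 + 2\cos(2\pi j/m)$ is a fixed algebraic number $< 4$ in absolute value, and the equation $a_f(p)^2/p^{k-1} = \chi(p)(\zeta+2+\zeta^{-1})$ is an equality of algebraic numbers that (for each fixed residue class of $p$ mod the conductor of $\chi$, so that $\chi(p)$ is fixed) pins down $a_f(p)^2$ exactly; folding in all finitely many $\zeta \in R$ and residue classes, and absorbing into $M_f$ the finitely many primes for which any of these finitely many exact equalities happens to hold, leaves the dichotomy valid for $p \nmid M_f$. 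The main obstacle is making this last finiteness step unconditional and clean: one wants a uniform bound on how many primes $p$ can have $\alpha_p/\beta_p$ equal to a \emph{fixed} nontrivial root of unity of order $\geq 3$, and the honest argument for that goes through the observation that such $p$ would make $f$'s $p$-th Hecke eigenvalue satisfy a fixed algebraic equation cutting out a proper (hence finite) subvariety of the relevant Hecke orbit data — equivalently, one can appeal directly to \cite[Lemma 2.2]{KRW07} style analysis of CM vs.\ non-CM forms, since infinitely many such $p$ would force a Grossencharacter relation, i.e.\ would force $f$ to have complex multiplication, and then $M_f$ can be taken to account for the CM case explicitly.
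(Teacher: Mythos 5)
Your reduction is on the right track and, up to the point where the exceptional primes must be shown to be finite in number, it matches the paper: the closed form of the Hecke recursion, the elimination of the equal-root case, the equivalence $a_f(p^r)=0 \iff (\alpha_p/\beta_p)^{r+1}=1$, the observation that $\zeta=\alpha_p/\beta_p$ lies in a field of bounded degree so only finitely many $\zeta$ can occur, and the harmlessness of $\zeta=-1$ are all correct. The genuine gap is the finiteness step itself, which is the whole content of the proposition. For a fixed $\zeta$ of order $m\geq 3$ you derive $a_f(p)^2=\chi(p)p^{k-1}(\zeta+2+\zeta^{-1})$, correctly note that Deligne's bound yields no contradiction, and then assert that this ``pins down $a_f(p)^2$ exactly''---but the right-hand side varies with $p$, so the equation by itself excludes no prime and gives no bound on how many primes satisfy it. The closing appeal to ``a proper subvariety of the relevant Hecke orbit data'' and to a CM/Grossencharacter dichotomy is not an argument (you would have to prove that infinitely many primes with $a_f(p)^2=c\,\chi(p)p^{k-1}$ for a fixed $c$ force CM, which you do not do, and citing \cite{KRW07} for it is circular here, since that is the very lemma under discussion); you yourself flag this step as an obstacle rather than close it.

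The paper closes exactly this gap by an elementary field-theoretic argument that uses the hypothesis that $k$ is \emph{even}, a hypothesis your proposal never invokes. From $\alpha(p)=\zeta\beta(p)$ and $\alpha(p)\beta(p)=\chi(p)p^{k-1}$ one gets $a_f(p)=\pm\gamma(1+\zeta^{-1})p^{(k-1)/2}$ with $\gamma^2=\zeta\chi(p)$. Since $k$ is even, $p^{(k-1)/2}=p^{(k-2)/2}\sqrt{p}$, so the nonzero algebraic number $\gamma(1+\zeta^{-1})\sqrt{p}$ must lie in $\Q(f)$; as $\gamma$ and $\zeta$ range over a fixed finite set of roots of unity (of bounded order, as you showed), this forces $\sqrt{p}$ into a single fixed number field $L$ obtained from $\Q(f)$ by adjoining those finitely many roots of unity. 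A fixed number field contains $\sqrt{p}$ for only finitely many primes $p$, since the quadratic fields $\Q(\sqrt{p})$ are pairwise distinct and $L$ has only finitely many subfields. Hence the exceptional primes are finite in number, and $M_f$ is simply their product (the primes dividing $N$ are handled by $a_f(p^r)=a_f(p)^r$ and need not even be excluded). With this mechanism in place, the entire CM discussion in your last paragraph can be deleted.
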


\begin{proof}
If $p \mid N$ then $a_f(p^r) = a_f(p)^r$ for any $r \geq 1$, so in this case  the conclusion holds trivially.
Let $p$ be a prime number such that $p \not \mid N$. 
If $a_f(p) = 0$, then there is nothing prove. 
Suppose that $a_f(p) \neq 0$ but $a_f(p^r) = 0$ for some $r\geq 2$.
Since $f$ is a primitive form, then by Hecke relations, we have
\begin{equation*}
    a_f(p^{m+1}) = a_f(p)a_f(p^m) - \chi(p)p^{k-1} a_f(p^{m-1})
\end{equation*}
for any $m \in  \N$.
These relations can be re-interpreted as
\begin{equation}
\sum_{r=0}^\infty a_f(p^r)X^r = \frac{1}{1-a_f(p)X+\chi(p)
p^{k-1}X^2}.
\end{equation}
Suppose that
\begin{equation}\label{Quad}
    1-a_f(p)X+\chi(p)
p^{k-1}X^2 = (1- \alpha(p)X)(1-\beta(p)X).
\end{equation}
By comparing the coefficients, we get that
\begin{equation*}
    \alpha(p) + \beta(p) = a_f(p) \ \ \ \ \ \ \mathrm{and} \ \ \ \ \ \ \alpha(p)\beta(p) = \chi(p)p^{k-1} \neq 0,
    \end{equation*}
since $p \not \mid N$ and hence $\chi(p) \neq 0$. If $\alpha(p) = \beta(p)$, then
\begin{equation}
\label{equalcannot}
    a_f(p^t) = (t+1)\alpha(p)^t\neq 0,
\end{equation}
for any $t \geq 2$ and this cannot happen. Therefore, $\alpha(p) \neq  \beta(p)$.
Then, by induction, we have the following
\begin{equation*}
    a_f(p^t) = \frac{\alpha(p)^{t+1} - \beta(p)^{t+1}}{\alpha(p)- \beta(p)}.
\end{equation*}
for any $t \geq 2$.  Recall that $a_f(p^r)=0$ for some $r \geq 2$. Therefore,
\begin{equation}
\label{rootofunity}
    a_f(p^r) = 0 \ \ \mathrm{if\ and\ only\ if} \ \ \Bigg(\frac{\alpha(p)}{\beta(p)}\Bigg)^{r+1} = 1,
\end{equation}
which implies that the ratio $\frac{\alpha(p)}{\beta(p)}$ is a  ($r+1$)-th root of unity. Since $a_f(p) \neq 0$, 
we get that $\alpha(p) = \zeta \beta(p)$ where $\zeta$ is a root of unity and $\zeta \ne -1$ . By the product relation, we get that
$\alpha(p)^2= \zeta\chi(p)p^{k-1}$, hence $\alpha(p) = \pm \gamma p^{(k-1)/2} $, 
where $\gamma^2 = \zeta \chi(p)$. Therefore,
\begin{equation*}
    a_f(p) = (1+\zeta^{-1})\alpha(p) = \pm \gamma (1+\zeta^{-1})p^{(k-1)/2} \ne 0.
\end{equation*}
In particular, $\gamma (1+\zeta^{-1})p^{(k-1)/2} \in  \mathbb{Q}(f)$,
where $\Q(f)$ is the number field generated by the Fourier coefficients of $f$ and by the values of  $\chi$.
Since $k$ is even, we have  
\begin{equation}
\label{main equation}
    \gamma(1+\zeta^{-1})\sqrt{p} \in \mathbb{Q}(f).
\end{equation}
We have that the number of such primes $p$ are finite, since $\mathbb{Q}(f)$ is a number field.
Take $M_f$ to be  the product of all such primes $p$. Thus, for any prime $p\not \mid M_f$, we have
either $a_f(p) = 0$ or $a_f(p^r)\neq 0$ for all $r\geq 1$. \end{proof}

\begin{cor}
Let $f, M_f$ be as in the above Proposition. Then the smallest $m \in \N$ with $(m,M_f)=1$ with $a_f(m)=0$ is a prime.
\end{cor}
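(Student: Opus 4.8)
The plan is to argue by contradiction using the minimality of $m$, the multiplicativity of the Fourier coefficients from Proposition~\ref{properties}, and the dichotomy established in Proposition~\ref{lemma2.2}. First I would dispose of the trivial case: $m \neq 1$, since $a_f(1) = 1 \neq 0$, so $m \ge 2$ and $m$ has at least one prime divisor. Write the prime factorisation $m = \prod_i p_i^{r_i}$.

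Next I would observe that since $(m, M_f) = 1$, every prime $p_i \mid m$ satisfies $(p_i, M_f) = 1$, and more generally every prime-power divisor $p_i^{r_i}$ of $m$ is coprime to $M_f$. By the multiplicativity of $a_f$ on coprime arguments (Proposition~\ref{properties}), $a_f(m) = \prod_i a_f(p_i^{r_i})$, so the hypothesis $a_f(m) = 0$ forces at least one factor to vanish; fix a prime $p = p_j$ and exponent $r = r_j \ge 1$ with $a_f(p^r) = 0$.

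Now I would apply Proposition~\ref{lemma2.2} to this prime $p$: since $p \not\mid M_f$, either $a_f(p) = 0$ or $a_f(p^s) \neq 0$ for all $s \ge 1$. The second alternative is ruled out by $a_f(p^r) = 0$, so $a_f(p) = 0$. Then $p$ is a natural number with $(p, M_f) = 1$ and $a_f(p) = 0$, while $p \le m$ because $p \mid m$; minimality of $m$ then gives $p = m$, so $m$ is prime.

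The whole argument is short and essentially routine once Proposition~\ref{lemma2.2} is available — all the genuine content was already absorbed into that statement. The only point needing (minor) care is the reduction from $a_f(m) = 0$ to the vanishing of a single prime-power coefficient, which rests on multiplicativity and the fact that a product of nonzero complex numbers is nonzero; I do not anticipate any real obstacle.
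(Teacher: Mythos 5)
Your argument is correct and follows essentially the same route as the paper's own proof (given for the analogous Corollary~\ref{Relatively prime}): multiplicativity of the coefficients reduces the vanishing at $m$ to the vanishing at a prime power $p^r$ with $p \nmid M_f$, and then the dichotomy of Proposition~\ref{lemma2.2} together with minimality forces $m=p$. The only cosmetic difference is that you invoke minimality at the level of the prime $p$ rather than first concluding $m$ is a prime power, which changes nothing of substance.
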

If $M_f=1$, then the corollary is exactly the generalization of Lehmer's result that
that the smallest $n$ for which $\tau(n)= 0$ must be a prime. Now, this leads to the question of
calculating $M_f$ for $f$. In the second part of~\cite[Lemma 2.2]{KRW07}, it was stated as follows:
\begin{prop}
\label{wrong}
Let $f,M_f$ be as in Proposition~\ref{lemma2.2}. If the character $f$ is trivial and the Fourier coefficients of $f$ are integers,
then one can take $M_f=N$. 
\end{prop}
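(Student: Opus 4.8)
The plan is to show that, under the two extra hypotheses, the finite set of exceptional primes produced in the proof of Proposition~\ref{lemma2.2} is contained in the set of prime divisors of $N$, so that $M_f=N$ is admissible. The hypotheses intervene at a single decisive point. If $\chi$ is trivial and every $a_f(n)\in\mathbb{Z}$, then the field $\mathbb{Q}(f)$ generated by the Fourier coefficients and the values of $\chi$ collapses to $\mathbb{Q}$. Consequently the concluding membership~\eqref{main equation} sharpens to $\gamma(1+\zeta^{-1})\sqrt p\in\mathbb{Q}$, and my goal is to promote this rationality into an outright contradiction for every prime $p\nmid N$ lying in the bad case $a_f(p)\neq 0$, $a_f(p^r)=0$.

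First I would record the shape of $a_f(p)$ in the bad case. With $\chi$ trivial the product relation extracted from~\eqref{Quad} is $\alpha(p)\beta(p)=p^{k-1}$, and $\zeta=\alpha(p)/\beta(p)$ is a root of unity with $\zeta\neq\pm1$. Specializing the excerpt's computation gives
\begin{equation*}
a_f(p)=\pm\mu\,p^{(k-1)/2},\qquad \mu:=\zeta^{1/2}+\zeta^{-1/2}=2\cos(\theta/2),\quad \zeta=e^{i\theta}.
\end{equation*}
The structural facts I would lean on are that $\mu$ is a \emph{real algebraic integer} (a sum of two roots of unity), that $\lvert\mu\rvert\le 2$, and that $k$ even forces $p^{(k-1)/2}=p^{(k-2)/2}\sqrt p$ with $p^{(k-2)/2}\in\mathbb{Z}$. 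Dividing by this integer and using $a_f(p)\in\mathbb{Z}$ then yields $\mu\sqrt p\in\mathbb{Q}$.

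Next I would exploit that $\mu$ is an algebraic integer. From $\mu\sqrt p\in\mathbb{Q}$ one gets $\mu^2\in\mathbb{Q}$; but $\mu^2$ is again an algebraic integer, so $\mu^2\in\mathbb{Z}$, and the bound $\lvert\mu\rvert\le 2$ confines it to $\mu^2\in\{0,1,2,3,4\}$. The values $\mu^2\in\{0,4\}$ force $\cos(\theta/2)\in\{0,\pm1\}$, giving either $a_f(p)=0$ (the good case) or $\zeta=1$ (excluded); and $\mu^2=1$ gives $\mu=\pm1$, whence $\mu\sqrt p=\pm\sqrt p\notin\mathbb{Q}$, contradicting $\mu\sqrt p\in\mathbb{Q}$. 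This routine case analysis disposes of everything except two residual values.

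The hard part, on which the whole statement turns, is the treatment of $\mu^2\in\{2,3\}$. Here $\mu=\pm\sqrt2$ or $\mu=\pm\sqrt3$, so $\mu\sqrt p=\pm\sqrt{2p}$ or $\pm\sqrt{3p}$, and rationality forces $2p$ (resp.\ $3p$) to be a perfect square, i.e.\ $p=2$ (resp.\ $p=3$), with $\zeta$ a primitive fourth (resp.\ sixth) root of unity. To reach the claim $M_f=N$ one must exclude these configurations whenever $p\nmid N$, and I expect this to be the genuine obstacle: the boundedness argument above is powerless against the small primes $p\in\{2,3\}$, so closing the proof would require some further arithmetic constraint on a level-$N$ primitive form with $p\nmid N$ that forbids $a_f(p)=\pm p^{k/2}$ together with a vanishing $a_f(p^r)$. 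Whether such a constraint is available for every $N$ is precisely the delicate point underlying the statement, and it is exactly here that the remaining work, and the fate of the proposition, would lie.
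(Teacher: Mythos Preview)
Your analysis faithfully reproduces the mechanism of the paper's recalled proof (which is the original argument from \cite{KRW07}), and in fact your parameterisation via $\mu=\zeta^{1/2}+\zeta^{-1/2}$ and the bound $\mu^2\in\{0,1,2,3,4\}$ is a clean equivalent of the paper's classification $\zeta\in\{-1,\pm i,\pm\omega_3,\pm\omega_3^2\}$ coming from the fact that $\alpha(p)/\beta(p)$ lies in a quadratic extension of $\mathbb{Q}$. Where the paper (i.e.\ \cite{KRW07}) asserts that every $\zeta\neq -1$ contradicts integrality of $a_f(p)$, you are more careful and isolate $\mu^2\in\{2,3\}$ as the residual cases, forcing $p=2$ with $a_f(2)=\pm 2^{k/2}$ or $p=3$ with $a_f(3)=\pm 3^{k/2}$.

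The crucial point is that your caution is warranted: the proposition as stated is \emph{false}, and this is precisely what the surrounding text of the paper is devoted to explaining. The paper exhibits explicit counterexamples---the weight~$2$ primitive forms attached to the elliptic curves $37a1$ and $53a1$, for which $a_f(2)\neq 0$ but $a_f(2^3)=0$, respectively $a_f(3)\neq 0$ but $a_f(3^5)=0$---showing that the configurations you were unable to rule out actually occur. There is no ``further arithmetic constraint'' available to exclude $a_f(p)=\pm p^{k/2}$ for $p\in\{2,3\}$ with $p\nmid N$; the correct statement (Proposition~\ref{Our}) is that one may take $M_f\mid 6$ with $(M_f,N)=1$, and Lemma~\ref{OurCorollary} shows that the factors $2$ and $3$ can be removed from $M_f$ precisely when $a_f(2)\neq\pm 2^{k/2}$ and $a_f(3)\neq\pm 3^{k/2}$ respectively. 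So your instinct that ``the fate of the proposition'' lies in those two cases is exactly right, and the fate is that it fails.
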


However, we are able to produce examples which contradicts this statement. 

\begin{example}
Let $E$ be an elliptic curve defined by the minimal Weierstrass equation $y^2 + y = x^{3}- x$. The Cremona label for $E$ is  $37a1$.
Let $f_E$ denote the primitive form (of weight $2$ and level $37$) associated to $E$ by the modularity theorem.
The Fourier expansion of $f_E$ is given by 
$$f_E(q)= \sum_{n=1}^{\infty}a_{f_E}(n)q^n= q-2q^2-3q^3+2q^4-2q^5+6q^6-q^7+6q^9+O(q^{10}).$$ 
Note that $(2,37)=1$ and $a_{f_E}(2)$ is non-zero but $a_{f_E}(8)=0$.
\end{example}

\begin{example}
Let $E$ be an elliptic curve defined by the minimal Weierstrass equation $y^2+xy+y= x^3-x^2$. The Cremona label for $E$ is $53a1$.
Let $f_E$ denote the primitive form (of weight $2$ and level $53$) associated to $E$ by the modularity theorem.
The Fourier expansion of $f_E$ is given by
$$f_E(q)= \sum_{n=1}^{\infty}a_{f_E}(n)q^n=q-q^2-3q^3-q^4+3q^6-4q^7+3q^8+6q^9+O(q^{10}).$$ 
Note that $(3,53)=1$ and $a_{f_E}(3)$ is non-zero but a simple calculation using the relations among the Fourier coefficients shows that $a_{f_E}(3^5)=0.$
\end{example}
For the convenience of the reader, we shall recall their proof of Proposition~\ref{wrong}.

\begin{proof}
Let $p$ be a prime number such that $p \not \mid N$.  If $a_f(p) = 0$, then there is nothing prove. 
Suppose $a_f(p) \neq 0$ but $a_f(p^r) = 0$ for some $r\geq 2$.
Arguing as in Proposition~\ref{lemma2.2}, the argument is valid till~\eqref{main equation}.
After that, they wished to show that \eqref{main equation} does not hold for any prime $p \not \mid N$.

By~\eqref{Quad},~\eqref{rootofunity}, we get that $\frac{\alpha(p)}{\beta(p)}= \zeta$ is a root of unity in a quadratic extension of $\Q$,
hence $\zeta \in \{-1, \pm i, \pm \omega_3 , \pm \omega_3^2\}$. All those except $\zeta = - 1$ contradict the fact that $f$ has integer coefficients by simple considerations
       such as the following, for $\zeta = \omega_3$ say: we have $\alpha(p)^2 = \omega_3 p^{k-1}$, $\gamma = \pm \omega_3^2 p^\frac{k-1}{2}$
       and $\lambda_f(p) = (1 + \omega^{-1}_3) \gamma = \pm (1 + \omega^{-1}_3)\omega_3^2 p^{\frac{k-1}{2}} = \pm (\omega_3^2 + \omega_3) p^{\frac{k-1}{2}} \not \in \Z$.
       Therefore, \eqref{main equation} does not hold for any prime $p \not \mid N$.
\end{proof}

In the last part of the above proof, when we calculated the expression in $(\ref{main equation})$ for $\zeta \neq \pm 1$,
it seem to hold for $p=2$ (resp., $p=3$) with some special values of $a_f(2)$ (resp., $a_f(3)$).
In the next proposition, we have calculated the optimal value of $M_f$ and the correct version of Proposition~\ref{wrong} 
is
\begin{prop}
\label{Our}
Let $f, M_f$ be  as in Proposition~\ref{lemma2.2}. If the character $\chi$ is trivial and the Fourier coefficients of $f$ are integers, then $M_f$ can be so chosen that $(M_f,N)=1$ and $M_f \mid 6$.
\end{prop}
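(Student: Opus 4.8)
The plan is to sharpen the argument of Proposition~\ref{lemma2.2} by one more step, replacing the field-membership assertion~\eqref{main equation} with an elementary valuation count. Fix a prime $p \nmid N$ that is ``bad'', i.e.\ $a_f(p) \neq 0$ while $a_f(p^r) = 0$ for some $r \geq 2$, and keep the notation of that proof: $1 - a_f(p)X + \chi(p)p^{k-1}X^2 = (1-\alpha(p)X)(1-\beta(p)X)$ and $\zeta := \alpha(p)/\beta(p)$, a root of unity. As already observed there, $\zeta \neq 1$ (else $a_f(p^t) = (t+1)\alpha(p)^t \neq 0$ by~\eqref{equalcannot}) and $\zeta \neq -1$ (else $a_f(p) = (1 + \zeta^{-1})\alpha(p) = 0$), so $\zeta$ is not real. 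Since $\chi$ is trivial, $\alpha(p)$ and $\beta(p)$ are roots of the integral quadratic $Y^2 - a_f(p)Y + p^{k-1}$; being non-real they generate an imaginary quadratic field, and the roots of unity of order $>2$ in any imaginary quadratic field have order $3$, $4$, or $6$. Hence $\zeta$ has order $3$, $4$, or $6$.

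Next I would exploit the relations $\alpha(p)\beta(p) = \chi(p)p^{k-1} = p^{k-1}$ and $\alpha(p) = \zeta\beta(p)$ to compute
\begin{equation*}
a_f(p)^2 = \bigl(\alpha(p) + \beta(p)\bigr)^2 = \bigl(2 + \zeta + \zeta^{-1}\bigr)\,p^{k-1},
\end{equation*}
where $c_\zeta := 2 + \zeta + \zeta^{-1} = |1 + \zeta|^2$ equals $1$, $2$, or $3$ according as $\zeta$ has order $3$, $4$, or $6$. Now $k$ is even, so $k - 1$ is odd, while $a_f(p)^2$ is a square in $\Z$; comparing $\ell$-adic valuations in $a_f(p)^2 = c_\zeta\, p^{k-1}$ then eliminates all but two cases. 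If $c_\zeta = 1$, the right-hand side has odd $p$-adic valuation, impossible. If $c_\zeta = 2$, the $2$-adic valuations force $p = 2$, and then $a_f(2) = \pm 2^{k/2}$. If $c_\zeta = 3$, the $3$-adic valuations force $p = 3$, and then $a_f(3) = \pm 3^{k/2}$. Therefore every bad prime $p \nmid N$ lies in $\{2, 3\}$.

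To conclude, I would take $M_f$ to be the product of those primes among $\{2, 3\}$ that are in fact bad for $f$, so $M_f \in \{1, 2, 3, 6\}$; since bad primes are coprime to $N$, this gives $(M_f, N) = 1$ and $M_f \mid 6$. That this $M_f$ has the defining property is then checked exactly as at the end of the proof of Proposition~\ref{lemma2.2}: for $p \nmid M_f$, either $p \mid N$, in which case $a_f(p^r) = a_f(p)^r$, or $p \nmid N$ and $p$ is not bad, which is the desired dichotomy.

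There is no real obstacle; the only point requiring care is the passage to the squared identity $a_f(p)^2 = c_\zeta p^{k-1}$, which converts the delicate condition ``$\gamma(1+\zeta^{-1})\sqrt p \in \Q(f)$'' of~\eqref{main equation} into a transparent valuation argument and isolates precisely the two cases $(\zeta$ of order $4,\ p = 2)$ and $(\zeta$ of order $6,\ p = 3)$ that were overlooked in~\cite{KRW07}. As the elliptic-curve examples above show (namely $37a1$ with $a_f(2) = -2 = -2^{k/2}$ and $53a1$ with $a_f(3) = -3 = -3^{k/2}$, both of weight $k = 2$), these cases genuinely occur, so the divisibility $M_f \mid 6$ cannot be improved in general.
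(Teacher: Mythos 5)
Your proof is correct, and it follows the same skeleton as the paper's: show that every bad prime $p \nmid N$ forces $\zeta = \alpha(p)/\beta(p)$ to be a root of unity of order $3$, $4$ or $6$, then eliminate or localize each case. The execution differs at two points, both in the direction of more transparency. First, where the paper (following~\cite{KRW07}) simply asserts that $\zeta$ is a root of unity lying in a quadratic extension of $\Q$ and lists $\{-1,\pm i,\pm\omega_3,\pm\omega_3^2\}$, you actually justify this: since $\chi$ is trivial and $a_f(p)\in\Z$, the numbers $\alpha(p),\beta(p)$ are conjugate roots of the integral quadratic $Y^2-a_f(p)Y+p^{k-1}$, non-real because $\zeta\neq\pm1$, so $\zeta\in\Q(\alpha(p))$ is a root of unity in an imaginary quadratic field and hence has order $3$, $4$ or $6$. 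Second, the paper runs the case analysis by explicitly extracting square roots ($\alpha(p)=\pm\omega_3^2p^{(k-1)/2}$, $\pm\omega_8 p^{(k-1)/2}$, etc.) and checking integrality of $a_f(p)$ case by case, whereas you square once, obtaining $a_f(p)^2=(2+\zeta+\zeta^{-1})p^{k-1}\in\{p^{k-1},\,2p^{k-1},\,3p^{k-1}\}$, and compare parities of $\ell$-adic valuations; this avoids choosing branches of $\gamma=\sqrt{\zeta\chi(p)}$, treats the three orders uniformly, and still isolates exactly the surviving cases $p=2$ with $a_f(2)=\pm2^{k/2}$ and $p=3$ with $a_f(3)=\pm3^{k/2}$, which is precisely what Lemma~\ref{OurCorollary} records. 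Both arguments then conclude identically by taking $M_f$ to be the product of the bad primes among $\{2,3\}$, giving $(M_f,N)=1$ and $M_f\mid 6$, and your closing remark on optimality matches the paper's elliptic-curve examples.
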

\begin{proof}
If $p \mid N$ then $a_f(p^r) = a_f(p)^r$ for any $r \geq 1$, so in this case  the conclusion of Proposition~\ref{lemma2.2} holds trivially. Hence, the number $M_f$ is relatively prime to $N$.

If $p \not \mid N$, we argue as in the proof of Proposition~\ref{wrong} till the last step. Now, we compute~\eqref{main equation} 
for all values of $\zeta$ to prove our proposition. Let $\omega_n$ denote $e^{\frac{2\pi i}{n}}$ for any $n \in \N$.
 
\begin{enumerate}
 \item The root of unity $\zeta$ cannot be $1$ because of~$\eqref{equalcannot}$.
 \item The root of unity $\zeta$ cannot be $-1$ because $0 \neq a_f(p) = \alpha(p)+\beta(p)$.
 \item  If $\zeta = \omega_3$, then $\alpha(p)^2= \omega_3 p^{k-1} \Rightarrow \alpha(p)= \pm \omega_3^2 p^{\frac{k-1}{2}}$. 
        This implies that $a_f(p) = \pm(1+ \omega_3^2) \omega_3^2p^{\frac{k-1}{2}}= \mp p^{\frac{k-1}{2}} \not \in \mathbb{Z}$. 
        For $\zeta = \omega_3^2$, we will get the same conclusion.

 \item If $\zeta = i$, then $\alpha(p)^2= ip^{k-1} \Rightarrow \alpha(p)= \pm \omega_8 p^{\frac{k-1}{2}} \Rightarrow a_f(p)= \pm (1-i) \omega_8 p^{\frac{k-1}{2}}= \pm \sqrt{2}p^{\frac{k-1}{2}}$. 
       This implies that 
       $$\sqrt{2}p^{\frac{k-1}{2}}\in \mathbb{Z} \iff p= 2,$$
       in which case $a_f(2)= \pm 2^{k/2}$. For $\zeta =-i$, we will get the same conclusion.
 
 \item If $\zeta = -\omega_3$, then $\alpha(p)^2= -\omega_3 p^{k-1} \Rightarrow \alpha(p)=\pm\frac{\sqrt{3}-i}{2}p^{\frac{k-1}{2}} 
       \Rightarrow a_f(p)=\pm(1+\frac{1+i\sqrt{3}}{2})\frac{\sqrt{3}-i}{2}p^{\frac{k-1}{2}}=\pm\sqrt{3}p^{\frac{k-1}{2}}$.
       This implies that
       $$\sqrt{3}p^{\frac{k-1}{2}}\in \mathbb{Z} \iff p=3,$$ 
       in which case $a_f(3)= \pm 3^{k/2}$. If $\zeta = -\omega_3^2$, then we will get same conclusion.
 \end{enumerate}
This case by case analysis would imply that $M_f$ is a divisor of $6$. This means that the possible values of $M_f$ are $1,2,3,6$.
\end{proof}
%
%

For any prime $p$, $\chi^\circ_p$ denote the trivial character on $\left(\Z/p\Z\right)^*$, i.e.,
for any $N \in \N$, we have
$$\chi^\circ_p (N) := 
                \begin{cases}
                0 & \text{if}\quad  p \mid N,\\
                1 & \text{if}\quad  p \not \mid N.
                \end{cases}  $$
Based on the proof of the above proposition, we can re-interpret the above result as follows:
\begin{lem}
\label{OurCorollary}
Let $f, M_f$ be as in Proposition~\ref{Our}. Then $M_f$ can be taken to be $2^{\chi^\circ_2(N)}3^{\chi^\circ_3(N)}$. 
Further if
\begin{itemize}
 \item $2 \mid M_f$, $a_f(2) \neq \pm 2^{k/2}$, then $2$ can be dropped  from $M_f$, i.e., $M_f$ can  be taken to be $3^{\chi^\circ_3(N)}$,
 \item $3 \mid M_f$, $a_f(3) \neq \pm 3^{k/2}$, then $3$ can be dropped  from $M_f$, i.e.,  $M_f$ 
  can  be taken to be  $2^{\chi^\circ_3(N)}$,
 \item $6 \mid M_f$, $a_f(p) \neq \pm p^{k/2}$(for $p=2,3$), then $6$ can be dropped  from $M_f$, i.e., $M_f$  can  be taken to be  $1$.
\end{itemize}
\end{lem}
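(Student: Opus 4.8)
The plan is to show that Lemma~\ref{OurCorollary} is essentially a bookkeeping reformulation of the proof of Proposition~\ref{Our}, so the proof will consist of tracking which primes $p$ genuinely force the exceptional relation~\eqref{main equation} to hold. First I would recall from the proof of Proposition~\ref{lemma2.2} that if $p \mid N$ the conclusion is automatic (since $a_f(p^r) = a_f(p)^r$), so no prime dividing $N$ need be included in $M_f$; this already explains the exponents $\chi^\circ_2(N)$ and $\chi^\circ_3(N)$, which are $1$ precisely when $2$ (resp.\ $3$) does not divide $N$.

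Next I would invoke the case-by-case analysis carried out in the proof of Proposition~\ref{Our}. The point is that among the roots of unity $\zeta \in \{1,-1,\pm i,\pm\omega_3,\pm\omega_3^2\}$ that can arise as $\alpha(p)/\beta(p)$, the values $\zeta = 1$, $\zeta=-1$, $\zeta = \omega_3$, $\zeta=\omega_3^2$ are immediately impossible when the Fourier coefficients are rational integers and $\chi$ is trivial, leaving only $\zeta = \pm i$ (which forces $p = 2$ and $a_f(2) = \pm 2^{k/2}$) and $\zeta = -\omega_3, -\omega_3^2$ (which force $p = 3$ and $a_f(3) = \pm 3^{k/2}$). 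Hence the only primes $p \nmid N$ that can possibly violate the dichotomy ``$a_f(p) = 0$ or $a_f(p^r) \neq 0$ for all $r$'' are $p=2$ with $a_f(2) = \pm 2^{k/2}$ and $p = 3$ with $a_f(3) = \pm 3^{k/2}$. Taking $M_f$ to be the product of exactly those bad primes gives $M_f = 2^{\chi^\circ_2(N)} 3^{\chi^\circ_3(N)}$ in the worst case, and each of the three refinements follows by simply deleting a prime from this product once its associated numerical obstruction $a_f(p) = \pm p^{k/2}$ fails: if $a_f(2) \neq \pm 2^{k/2}$ then $2$ never causes a violation and can be dropped, similarly for $3$, and if both fail then $M_f = 1$ works.

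The one genuine subtlety — and the step I would be most careful about — is the converse direction implicit in the ``only if'' phrasing: one must make sure that when $a_f(p) = \pm p^{k/2}$ for $p \in \{2,3\}$ there actually can be a vanishing $a_f(p^r)$, so that dropping the prime is not merely harmless but the stated value of $M_f$ is genuinely the one forced by the argument. This is already visible in the two elliptic-curve examples ($37a1$ with $a_{f_E}(2) = -2 = -2^{k/2}$ and $a_{f_E}(8) = 0$; $53a1$ with $a_{f_E}(3) = -3 = -3^{k/2}$ and $a_{f_E}(3^5) = 0$), so I would cite those examples to confirm that the exceptional primes are not vacuous. I do not expect to need any new computation beyond what appears in the proof of Proposition~\ref{Our}; the whole content is organizing that analysis around the indicator functions $\chi^\circ_2(N), \chi^\circ_3(N)$ and the numerical conditions $a_f(p) = \pm p^{k/2}$.

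\begin{proof}
By the proof of Proposition~\ref{Our}, a prime $p \mid N$ never violates the dichotomy, so we may assume $p \nmid N$, and the case-by-case analysis there shows that the relation~\eqref{main equation} can hold only when $\zeta = \pm i$, forcing $p = 2$ and $a_f(2) = \pm 2^{k/2}$, or when $\zeta = -\omega_3$ or $-\omega_3^2$, forcing $p = 3$ and $a_f(3) = \pm 3^{k/2}$. Hence the set of primes $p$ for which the dichotomy ``$a_f(p) = 0$ or $a_f(p^r) \neq 0$ for all $r \geq 1$'' can fail is contained in $\{p : p \mid N\}^c \cap \{2,3\}$, further cut down by the conditions $a_f(2) = \pm 2^{k/2}$ and $a_f(3) = \pm 3^{k/2}$. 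Taking $M_f$ to be the product of exactly the primes in $\{2,3\}$ that survive both restrictions yields
$$M_f = 2^{\chi^\circ_2(N)} 3^{\chi^\circ_3(N)}$$
when $a_f(2) = \pm 2^{k/2}$ and $a_f(3) = \pm 3^{k/2}$, and in general one may delete from this product any prime $p \in \{2,3\}$ for which $a_f(p) \neq \pm p^{k/2}$, since such a prime never causes a violation. This gives the three stated refinements: dropping $2$ when $a_f(2) \neq \pm 2^{k/2}$, dropping $3$ when $a_f(3) \neq \pm 3^{k/2}$, and dropping both (so $M_f = 1$) when neither exceptional value occurs. That the surviving primes are not vacuous is witnessed by the elliptic curves $37a1$ and $53a1$ above.
\end{proof}
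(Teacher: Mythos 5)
Your proposal is correct and matches the paper's intent: the paper gives no separate proof of Lemma~\ref{OurCorollary}, presenting it as a re-interpretation of the case-by-case analysis in the proof of Proposition~\ref{Our}, which is exactly the bookkeeping you carry out (only $p=2$ with $a_f(2)=\pm 2^{k/2}$ and $p=3$ with $a_f(3)=\pm 3^{k/2}$, among primes not dividing $N$, can violate the dichotomy). Your extra remark about non-vacuousness via the curves $37a1$ and $53a1$ goes beyond what the lemma as stated requires, but it is consistent with the paper's examples and with the remark that this choice of $M_f$ is optimal.
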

Note that the above lemma gives an optimal $M_f$ for which Proposition~\ref{Our}
continues to hold. The following corollaries describes the nature of the first vanishing of Fourier coefficients of primitive forms of higher weight $k$ and higher level $N$.

\begin{cor}\label{Relatively prime}
Let $f=\sum_{n=1}^{\infty} a_f(n) q^n \in S_{k}(\Gamma_0(N))$ be a primitive form of even weight $k$ and level $N$
with $a_f(n)\in \mathbb{Z}$. Let $M_f$ be as in Lemma~\ref{OurCorollary}.
Then the smallest $n \in \N$ with $(n,M_f)=1$ with $a_f(n)=0$ is prime.
\end{cor}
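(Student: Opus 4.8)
The plan is to reduce the statement to Proposition~\ref{properties} (the multiplicativity of Fourier coefficients of normalized eigenforms) combined with Proposition~\ref{Our} / Lemma~\ref{OurCorollary}. Suppose $n \in \N$ is the smallest index with $(n, M_f) = 1$ and $a_f(n) = 0$, and assume for contradiction that $n$ is not prime. First I would handle the trivial case $n = 1$: since $f$ is primitive, $a_f(1) = 1 \neq 0$, so $n \geq 2$; if $n$ is not prime, it is either a prime power $p^r$ with $r \geq 2$, or it has at least two distinct prime factors.

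In the composite-with-two-distinct-primes case, write $n = p^r m$ with $p \nmid m$, $m > 1$. By the multiplicativity relation (item (2) of Proposition~\ref{properties}), $a_f(n) = a_f(p^r) a_f(m)$. Since $a_f(n) = 0$, one of the factors vanishes; but $p^r < n$ and $m < n$, and both are coprime to $M_f$ (as they divide $n$, which is coprime to $M_f$), contradicting minimality of $n$. So $n = p^r$ is a prime power with $r \geq 2$, and $p \nmid M_f$. Now I invoke the main lemma: since $f$ has trivial character and integer coefficients and $p \nmid M_f$, Lemma~\ref{OurCorollary} (equivalently, the dichotomy in Proposition~\ref{lemma2.2} applied with this choice of $M_f$) says that for such $p$ either $a_f(p) = 0$ or $a_f(p^s) \neq 0$ for all $s \geq 1$. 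If $a_f(p) = 0$ then $p$ itself is a smaller (prime, hence in particular smaller than $p^r$) index coprime to $M_f$ with vanishing coefficient, contradicting minimality; if instead $a_f(p^s) \neq 0$ for all $s$, then $a_f(p^r) \neq 0$, contradicting $a_f(n) = 0$. Either way we reach a contradiction, so $n$ must be prime.

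The one point that needs care — and which I expect to be the only real obstacle — is making sure the chosen $M_f$ from Lemma~\ref{OurCorollary} is exactly the one for which the clean dichotomy ``$a_f(p) = 0$ or $a_f(p^s) \neq 0$ for all $s$'' holds for every prime $p \nmid M_f$, including $p = 2, 3$. This is precisely the content of the case analysis in the proof of Proposition~\ref{Our}: the primes $2$ and $3$ are included in $M_f$ only when they do not divide $N$, and they may be dropped exactly when $a_f(2) \neq \pm 2^{k/2}$ (resp. $a_f(3) \neq \pm 3^{k/2}$), which is the obstruction to the dichotomy. So I would phrase the argument as: by Lemma~\ref{OurCorollary} the integer $M_f$ has the property that for every prime $p \nmid M_f$, the dichotomy holds; then the induction on the size of $n$ above goes through verbatim. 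Since $f \in S_k(\Gamma_0(N))$ has trivial character by definition, all hypotheses of Proposition~\ref{Our} and Lemma~\ref{OurCorollary} are in force, so no extra assumptions are needed.
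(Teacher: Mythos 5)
Your proof is correct and follows essentially the same route as the paper: multiplicativity of the Fourier coefficients (Proposition~\ref{properties}) forces the minimal index to be a prime power $p^r$ with $p \nmid M_f$, and the dichotomy from Proposition~\ref{Our}/Lemma~\ref{OurCorollary} then rules out $r \geq 2$. Your version merely spells out the minimality and $n=1$ details more explicitly than the paper does.
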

\begin{proof}
Let $n$ be the smallest integer with $(n,M_f)=1$ such that $a_f(n)=0$. 
Since $f$ is a primitive form, we know that the Fourier coefficients of $f$ satisfy
\begin{equation}\label{Fcoeff}
a_{f}(n_1n_2)= a_{f}(n_1)a_{f}(n_2) \ \ \mathrm{if}\ (n_1,n_2)=1.
\end{equation}
This forces that $n=p^r$, where $p$ is a prime with $(p,M_f)=1$.
By Proposition~\ref{Our}, we get that $r=1$.
Therefore $n$  has to be a prime.
\end{proof}
The following two corollaries can be thought of as a generalization of the result of Lehmer 
which states that the smallest $n$ for which $\tau(n)= 0$ must be a prime. 
\begin{cor}
Let $f = \sum_{n=0}^{\infty} a_f(n) q^n \in S_{k} (\Gamma_0(N))$ be a primitive form of even weight $k$, level $N$ with $a_f(n)\in \Z$. If $6$ divides $N$, then the smallest $n$ for which $a_f(n) = 0$ is  a prime. 
\end{cor}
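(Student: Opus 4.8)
The plan is to deduce this immediately from Lemma~\ref{OurCorollary} and Corollary~\ref{Relatively prime}, so the argument is essentially just bookkeeping. First I would observe that the hypotheses of Proposition~\ref{Our} are met: $f \in S_k(\Gamma_0(N))$ is a primitive form of even weight $k$ with integer Fourier coefficients, and since the diamond operators act trivially on $M_k(\Gamma_0(N))$ (equivalently $M_k(\Gamma_0(N)) = M_k(N,\chi^\circ_N)$), the nebentypus character of $f$ is trivial. Hence Proposition~\ref{Our} and its refinement Lemma~\ref{OurCorollary} apply to $f$.

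Next I would exploit the divisibility hypothesis. Since $6 \mid N$, we have both $2 \mid N$ and $3 \mid N$, so by the definition of $\chi^\circ_p$ we get $\chi^\circ_2(N) = \chi^\circ_3(N) = 0$. By Lemma~\ref{OurCorollary}, $M_f$ may therefore be taken to be $2^{\chi^\circ_2(N)} 3^{\chi^\circ_3(N)} = 2^0 \cdot 3^0 = 1$.

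Finally I would apply Corollary~\ref{Relatively prime} with this choice $M_f = 1$. For every $n \in \N$ one has $(n, 1) = 1$, so the coprimality constraint in that corollary is vacuous, and its conclusion states exactly that the smallest $n$ with $a_f(n) = 0$ is prime. (Implicitly such an $n$ exists by hypothesis; note $a_f(1) = 1 \neq 0$, so $n \geq 2$.) There is no real obstacle: all the substance lies in the earlier Proposition~\ref{Our}/Lemma~\ref{OurCorollary}, and the only point worth stating explicitly is that $6 \mid N$ kills both of the ``bad primes'' $2$ and $3$ that could otherwise appear in $M_f$.
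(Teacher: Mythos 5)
Your proposal is correct and follows essentially the same route as the paper: both arguments reduce to showing $M_f=1$ when $6\mid N$ (you via the explicit formula $M_f=2^{\chi^\circ_2(N)}3^{\chi^\circ_3(N)}$ of Lemma~\ref{OurCorollary}, the paper via $M_f\mid 6$, $6\mid N$ and $(M_f,N)=1$ from Proposition~\ref{Our}) and then invoke Corollary~\ref{Relatively prime}, whose coprimality condition becomes vacuous.
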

\begin{proof}
Since $M_f \mid 6$, and $6 \mid N$, we have that $M_f \mid N$.
Since $(M_f,N)=1$, we have that $M_f=1$. By Corollary~\ref{Relatively prime},
the result follows.
\end{proof}
In order to get a similar conclusion as above for cusp forms when  $6 \not \mid N$, e.g., for $\Delta$-function, we need to impose some conditions on $a_f(2), a_f(3)$, which is the content of the following Corollary.
It follows from Lemma~\ref{OurCorollary} and  coincides with~\cite[Proposition 4.2]{TQ18},

\begin{cor}
Let $f = \sum_{n=0}^{\infty} a_f(n) q^n \in S_{k} (\Gamma_0(N))$ be a primitive form of even weight $k$, level $N$
with $a_f(n)\in \Z$. 
Suppose $a_f(2) \neq \pm 2^{\frac{k}{2}}$ and $a_f(3) \neq \pm 3^{\frac{k}{2}}$. 
Then the smallest $n$ for which $a_f(n) = 0$ is  a prime. 
\end{cor}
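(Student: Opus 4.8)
The plan is to deduce this corollary directly from Lemma~\ref{OurCorollary} together with Corollary~\ref{Relatively prime}, so essentially no new computation is needed. First I would invoke Lemma~\ref{OurCorollary}: for a primitive form $f \in S_k(\Gamma_0(N))$ of even weight with integral Fourier coefficients, the integer $M_f$ may be taken to be $2^{\chi^\circ_2(N)}3^{\chi^\circ_3(N)}$, and the bulleted refinements in that lemma say precisely that if $a_f(2) \neq \pm 2^{k/2}$ then the prime $2$ may be dropped from $M_f$, and if $a_f(3) \neq \pm 3^{k/2}$ then the prime $3$ may be dropped. Under the hypotheses of the present corollary both conditions hold, so both $2$ and $3$ can be dropped, leaving $M_f = 1$.

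Next I would feed $M_f = 1$ into Corollary~\ref{Relatively prime}: the smallest $n \in \N$ with $(n, M_f) = 1$ and $a_f(n) = 0$ is prime. Since $(n,1) = 1$ automatically for every $n$, the condition $(n, M_f) = 1$ is vacuous, so the smallest $n$ with $a_f(n) = 0$ is simply the smallest $n$ with $a_f(n) = 0$, and that integer is prime. This is exactly the assertion to be proved. For completeness I might also recall why Corollary~\ref{Relatively prime} applies: multiplicativity of the Fourier coefficients, $a_f(n_1 n_2) = a_f(n_1) a_f(n_2)$ for coprime $n_1, n_2$ (from Proposition~\ref{properties}(2)), forces the smallest vanishing index to be a prime power $p^r$, and then Proposition~\ref{Our} (via the $r \geq 2$ case ruled out once $p \nmid M_f$) forces $r = 1$.

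The only point requiring a word of care is that Lemma~\ref{OurCorollary} and Corollary~\ref{Relatively prime} are stated for $f \in S_k(\Gamma_0(N))$ with $\chi$ trivial, which is exactly the setting here, so there is no gap; one just needs to observe that dropping \emph{both} $2$ and $3$ simultaneously is legitimate, which is precisely the third bullet of Lemma~\ref{OurCorollary}. I do not anticipate any genuine obstacle: the corollary is a clean specialization, and the entire proof is two sentences of bookkeeping. If anything, the mild subtlety is purely expository, namely making explicit that the hypotheses $a_f(2) \neq \pm 2^{k/2}$ and $a_f(3) \neq \pm 3^{k/2}$ are exactly what the refinement bullets demand, and that in the end $M_f = 1$ makes the coprimality constraint in Corollary~\ref{Relatively prime} trivial.
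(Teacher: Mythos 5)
Your proposal is correct and follows essentially the same route as the paper: both deduce $M_f=1$ from the refinement bullets of Lemma~\ref{OurCorollary} under the hypotheses $a_f(2)\neq \pm 2^{k/2}$, $a_f(3)\neq \pm 3^{k/2}$, and then conclude via Corollary~\ref{Relatively prime}, where the coprimality condition becomes vacuous. Your added remarks on why Corollary~\ref{Relatively prime} applies (multiplicativity plus Proposition~\ref{Our}) are just an unpacking of what the paper cites, not a different argument.
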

\begin{proof}
We know that $M_f \mid 6$ and $(M_f,N)=1$.  By Lemma~\ref{OurCorollary}, it follows that $M_f$ can be improved to $1$.
Therefore, the result follows by Corollary~\ref{Relatively prime}.
\end{proof}

\section{Hilbert modular forms}
There is a generalization of Proposition~\ref{lemma2.2} available in the context of Hilbert modular forms.
In fact, we used this generalization to study the simultaneous non-vanishing of Fourier coefficients of distinct primitive forms 
at powers of prime ideals (cf. ~\cite{DK19}). We shall state that generalization in this section.

Let $K$ be a totally real number field of odd degree $n$ and $\mathbb{P}$ denote the set of all prime ideals of $\mcO_K$ with odd inertia degree.
Let $\mathbf{P}$ denote the set of all prime ideals of $\mcO_K$.

Let $\f$ be a primitive form over $K$ of level $\mfc$, with character $\chi$ and weight $\mathbf{2k}=(2k_1,\ldots, 2k_n)$. Let $2k_0$ denote the maximum
of $\{2k_1, \ldots, 2k_n \}$. For each integral ideal $\mfm \subseteq \mcO_K$, let $C(\mfm,\f)$ denote the Fourier coefficients of $\f$ at $\mfm$.
 

Now, we state the result which is analogous to Proposition~\ref{lemma2.2} for $\f$.
\begin{prop}
\label{HilbertProposition}
Let $\f$ be a primitive form over $K$ of level $\mfc$, with character $\chi$ and
weight $\mathbf{2k}$. Then there exists an integer $M_\f \geq 1$ with $N(\mfc)\mid M_\f$ such that for any prime $p \not \mid M_\f$ and 
   for any prime ideal $\mfp \in \mathbb{P}$ over $p$,
   we have either $C(\mfp,\f) = 0$ or $C(\mfp^r,\f) \neq 0$ for all $r \geq 1$. 
\end{prop}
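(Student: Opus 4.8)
The plan is to mimic the proof of Proposition~\ref{lemma2.2} almost verbatim, tracking how the odd inertia degree hypothesis on the prime ideals in $\mathbb{P}$ enters at the crucial final step. First I would dispose of the prime ideals dividing the level: if $\mfp \mid \mfc$, the Hecke relations for Hilbert modular forms degenerate to $C(\mfp^r,\f) = C(\mfp,\f)^r$, so the conclusion holds trivially; hence $N(\mfc) \mid M_\f$ can be imposed at the outset and we may henceforth restrict to $\mfp \nmid \mfc$. Fix a rational prime $p$ and a prime ideal $\mfp \in \mathbb{P}$ over $p$ with $C(\mfp,\f) \neq 0$ but $C(\mfp^r,\f) = 0$ for some $r \geq 2$. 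The local Hecke recursion $C(\mfp^{m+1},\f) = C(\mfp,\f)C(\mfp^m,\f) - \chi(\mfp)N(\mfp)^{2k_0-1}C(\mfp^{m-1},\f)$ (with the appropriate normalization of the weight exponent) gives the generating-function identity
\begin{equation*}
\sum_{r=0}^\infty C(\mfp^r,\f)X^r = \frac{1}{1 - C(\mfp,\f)X + \chi(\mfp)N(\mfp)^{w}X^2},
\end{equation*}
where $w$ is the relevant weight-dependent exponent. Write the denominator as $(1-\alpha(\mfp)X)(1-\beta(\mfp)X)$, so $\alpha(\mfp)+\beta(\mfp) = C(\mfp,\f)$ and $\alpha(\mfp)\beta(\mfp) = \chi(\mfp)N(\mfp)^w \neq 0$. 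The same dichotomy as before applies: $\alpha(\mfp) = \beta(\mfp)$ would force $C(\mfp^t,\f) = (t+1)\alpha(\mfp)^t \neq 0$ for all $t$, contradicting the vanishing assumption, so $\alpha(\mfp) \neq \beta(\mfp)$ and $C(\mfp^t,\f) = \frac{\alpha(\mfp)^{t+1}-\beta(\mfp)^{t+1}}{\alpha(\mfp)-\beta(\mfp)}$; vanishing at $t = r$ forces $\zeta := \alpha(\mfp)/\beta(\mfp)$ to be an $(r+1)$-th root of unity, and $C(\mfp,\f) \neq 0$ rules out $\zeta = -1$.

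Next I would run the same extraction: from $\alpha(\mfp) = \zeta\beta(\mfp)$ and $\alpha(\mfp)\beta(\mfp) = \chi(\mfp)N(\mfp)^w$ one gets $\alpha(\mfp)^2 = \zeta\chi(\mfp)N(\mfp)^w$, hence $\alpha(\mfp) = \pm\gamma N(\mfp)^{w/2}$ with $\gamma^2 = \zeta\chi(\mfp)$, and therefore $C(\mfp,\f) = (1+\zeta^{-1})\alpha(\mfp) = \pm\gamma(1+\zeta^{-1})N(\mfp)^{w/2}$. This quantity lies in the number field $\Q(\f)$ generated by the Fourier coefficients of $\f$ and the values of $\chi$. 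This is exactly the point where odd inertia degree is used: since $\mfp \in \mathbb{P}$, the inertia degree $f(\mfp/p)$ is odd, so $N(\mfp) = p^{f(\mfp/p)}$ is an odd power of $p$, and after stripping off the integer power $N(\mfp)^{\lfloor w/2 \rfloor}$ or $N(\mfp)^{(w - f(\mfp/p))/2}$ as appropriate, one is left with a constraint of the shape $\gamma(1+\zeta^{-1})\sqrt{p} \in \Q(\f)$ (using that the weights $2k_i$ are even, so $w$ is odd exactly when $N(\mfp)$ contributes the extra $\sqrt{p}$). Thus each such bad prime $p$ forces $\sqrt{p}$ into a fixed element of the fixed number field $\Q(\f)$, which can happen for only finitely many $p$; take $M_\f$ to be $N(\mfc)$ times the product of all such primes. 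Then for any rational prime $p \nmid M_\f$ and any $\mfp \in \mathbb{P}$ over $p$, either $C(\mfp,\f) = 0$ or $C(\mfp^r,\f) \neq 0$ for all $r \geq 1$, as claimed.

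The main obstacle, and the only place genuine care is needed, is the bookkeeping of the weight exponent $w$ in the Hecke relation and confirming that the odd inertia degree really does produce an honest $\sqrt{p}$ rather than an integer power of $p$ — i.e. that the parity of $f(\mfp/p)\cdot(\text{weight exponent})$ comes out odd. One must be precise about the normalization of the Hilbert-modular Hecke operators (the exponent is typically $2k_0 - 1$ after twisting to a common weight, or varies with the component weights), and check that evenness of each $2k_i$ together with oddness of $f(\mfp/p)$ guarantees the residual irrationality. Everything else — the generating function, the $\alpha/\beta$ factorization, the root-of-unity argument, the finiteness of primes $p$ with $\sqrt{p} \in \Q(\f)$ — transfers mechanically from the proof of Proposition~\ref{lemma2.2}. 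I would also remark that, unlike the classical case, one does not expect a clean analogue of Proposition~\ref{Our} bounding $M_\f$ by a divisor of $6$, because the field $\Q(\f)$ may be strictly larger than $\Q$ even when $\chi$ is trivial, so several small primes $p$ can satisfy $\sqrt p \in \Q(\f)$ simultaneously.
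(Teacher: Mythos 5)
Your proposal follows essentially the same route as the paper's own proof: the same Hecke recursion and generating function with exponent $2k_0-1$, the same $\alpha(\mfp)/\beta(\mfp)$ root-of-unity dichotomy, and the same key observation that odd inertia degree makes $f(\mfp/p)(2k_0-1)$ odd, forcing $\gamma(1+\zeta^{-1})\sqrt{p}$ into the number field $\Q(\f)$ (a number field by Shimura), which can happen for only finitely many $p$. Your handling of the level primes and the definition of $M_\f$ as $N(\mfc)$ times the product of the bad primes also matches the paper, so the argument is correct and essentially identical.
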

\begin{proof}
Let $p$ be a prime number such that $p \not \mid N(\mfc)$. Let $\mfp \in \mathbb{P}$ be a prime ideal of $\mcO_K$ over $p$ and $\mfp \not \mid \mfc$. 
If $C(\mfp,\f)=0$, then there is nothing prove. If $C(\mfp, \f) \neq 0$, then we need to show that $C(\mfp^r,\f) \neq 0$ for all $r\geq 2$,
except for finitely many prime ideals $\mfp \in \mathbb{P}$.  

Suppose that $C(\mfp, \f) \neq 0$ but $C(\mfp^r,\f) = 0$ for some $r\geq 2$.
Since $\f$ is a primitive form, then by Hecke relations, we have
\begin{equation*}
    C(\mfp^{m+1}, \f) = C(\mfp,\f)C(\mfp^m,\f) - \chi(\mfp)N(\mfp)^{2k_0-1} C(\mfp^{m-1},\f).
\end{equation*}
These relations can be re-interpreted as
\begin{equation}
\sum_{r=0}^\infty C(\mfp^r,\f)X^r = \frac{1}{1-C(\mfp,\f)X+\chi(\mfp)
N(\mfp)^{2k_0-1}X^2}.
\end{equation}
Suppose that
\begin{equation*}
    1-C(\mfp,\f)X+\chi(\mfp)
N(\mfp)^{2k_0-1}X^2 = (1- \alpha(\mfp)X)(1-\beta(\mfp)X).
\end{equation*}
By comparing the coefficients, we get that
\begin{equation*}
    \alpha(\mfp) + \beta(\mfp) = C(\mfp, \f) \ \ \ \ \ \ \mathrm{and} \ \ \ \ \ \ \alpha(\mfp)\beta(\mfp) = \chi(\mfp)N(\mfp)^{2k_0-1} \neq 0,
    \end{equation*}
since $\mfp \not \mid \mfc$ and hence $\chi(\mfp) \neq 0$. If $\alpha(\mfp) = \beta(\mfp)$, then 
\begin{equation*}
    C(\mfp^r,\f) = (r+1)\alpha(\mfp)^r\neq 0,
\end{equation*}
which cannot happen for any $r \geq 2$. So, $\alpha(\mfp)$ cannot be equal to $\beta(\mfp)$.
Then by induction, for any $r \geq 2$, we have the following
\begin{equation*}
    C(\mfp^r,\f) = \frac{\alpha(\mfp)^{r+1} - \beta(\mfp)^{r+1}}{\alpha(\mfp)- \beta(\mfp)}.
\end{equation*}
In this case, we have
\begin{equation*}
    C(\mfp^r,\f) = 0 \ \ \mathrm{if\ and\ only\ if} \ \ \Bigg(\frac{\alpha(\mfp)}{\beta(\mfp)}\Bigg)^{r+1} = 1,
\end{equation*}
which implies that the ratio $\frac{\alpha(\mfp)}{\beta(\mfp)}$ is a root of unity. Since $C(\mfp,\f) \neq 0$, 
we get that $\alpha(\mfp) = \zeta \beta(\mfp)$ where $\zeta$ is a root of unity and $\zeta \ne -1$ . By the product relation, we get that
$\alpha(\mfp)^2= \zeta\chi(\mfp){{N(\mfp)}^{2k_0-1}}$, hence $\alpha(\mfp) = \pm \gamma {N(\mfp)}^{{(2k_0-1)}/2} $, 
where $\gamma^2 = \zeta \chi(\mfp)$. Therefore,
\begin{equation*}
    C(\mfp,\f) = (1+\zeta^{-1})\alpha(\mfp) = \pm \gamma (1+\zeta^{-1}){N(\mfp)^{(2k_0-1)/2}} \ne 0.
\end{equation*}
In particular, $\mathbb{Q}(\gamma (1+\zeta^{-1})N(\mfp)^\frac{2k_0-1}{2}) \subseteq \mathbb{Q}(\f)$,
where $\Q(\f)$ is the field generated by $\{C(\mfm,\f)\}_{\mfm \subseteq \mcO_K}$ and by the values of the character $\chi$.
Since $\mfp \in \mathbb{P}$, $N(\mfp)= p^{f}$, where $f \in \N$ odd.
Hence, we have
\begin{equation}
    \mathbb{Q}(\gamma(1+\zeta^{-1})p^{\frac{f(2k_0-1)}{2}}) \subseteq \mathbb{Q}(\f).
\end{equation}
Since $2k_0-1$, $f$ are odd, we have that 
\begin{equation}
    \mathbb{Q}(\gamma(1+\zeta^{-1})\sqrt{p}) \subseteq \mathbb{Q}(\f).
\end{equation}
By~\cite[Proposition 2.8]{Shi78}, the field $\mathbb{Q}(\f)$ is a number field. Hence, the number of such primes $p$ are finite. Take $M_\f$ to be  the product of all such primes $p$ and $N(\mfc)$.
Thus, for any prime $p\not \mid M_\f$ and for any prime ideal $\mfp \in \mathbb{P}$ over $p$, we have
either $C(\mfp,\f) = 0$ or $C(\mfp^r,\f)\neq 0$ for all $r\geq 1$.
\end{proof}
We end this article with the following statement:
\begin{lem}
Let $\f$ and $K$ be as in Proposition~\ref{HilbertProposition}.
Further, if $K$ is Galois over $\Q$, then
there exists an integer $M_\f \geq 1$ with $N(\mfc)\mid M_\f$ such that for any prime $p \not \mid M_\f$ and 
   for any prime ideal $\mfp \in \mathbf{P}$ over $p$, 
   we have either $C(\mfp,\f) = 0$ or $C(\mfp^r,\f) \neq 0$ for all $r \geq 1$. 
\end{lem}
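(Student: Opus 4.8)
The plan is to reduce the Galois case to Proposition~\ref{HilbertProposition} by exploiting the transitivity of the Galois action on prime ideals lying above a fixed rational prime. Let me sketch the argument.

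First I would recall the setup. Proposition~\ref{HilbertProposition} already produces an integer $M_\f$ (divisible by $N(\mfc)$) handling all $\mfp \in \mathbb{P}$, i.e. the prime ideals of odd inertia degree. The only thing left is to deal with prime ideals $\mfp \in \mathbf{P} \setminus \mathbb{P}$, that is, those of even inertia degree. Since $[K:\Q] = n$ is odd, if $p\mcO_K = \prod_i \mfp_i^{e_i}$ with residue degrees $f_i$, we have $\sum_i e_i f_i = n$ odd, so at least one $f_i$ must be odd; hence $p$ always has at least one prime above it in $\mathbb{P}$. This is the structural fact that makes the Galois hypothesis useful.

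Next, the key step: when $K/\Q$ is Galois, $\mathrm{Gal}(K/\Q)$ acts transitively on the set of primes above $p$, and in particular all of them share a common inertia degree $f$ and ramification index $e$ with $efg = n$. Since some $f_i$ is odd and all are equal, \emph{every} prime above $p$ has odd inertia degree, i.e. $\mathbf{P}$ and $\mathbb{P}$ contain exactly the same primes above any unramified-or-not rational prime when $n$ is odd. Wait — more carefully: $efg=n$ odd forces $f$ odd (and $e$, $g$ odd too). Therefore for a Galois $K$ of odd degree, $\mathbb{P} = \mathbf{P}$: every prime ideal of $\mcO_K$ has odd inertia degree. So the lemma is literally Proposition~\ref{HilbertProposition} with $\mathbb{P}$ replaced by $\mathbf{P}$, and the same $M_\f$ works verbatim.

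So the proof would run: take $M_\f$ as in Proposition~\ref{HilbertProposition}. Let $p \not\mid M_\f$ and let $\mfp \in \mathbf{P}$ lie over $p$. Write $p\mcO_K = (\mfp_1\cdots\mfp_g)^e$ (Galois case), with common inertia degree $f$ satisfying $efg = n$. Since $n$ is odd, $f$ is odd, so $\mfp = \mfp_i \in \mathbb{P}$ for some $i$. Now apply Proposition~\ref{HilbertProposition} directly to conclude that either $C(\mfp,\f) = 0$ or $C(\mfp^r,\f)\neq 0$ for all $r\geq 1$. The main obstacle — really the only subtlety — is confirming the elementary number-theory claim that $efg=n$ with $n$ odd forces $f$ odd in the Galois setting; everything else is a citation of the already-proven Proposition. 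I would present this as a short two-paragraph proof, with the degree parity observation isolated as the crux.

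\begin{proof}
By Proposition~\ref{HilbertProposition}, there exists an integer $M_\f \geq 1$ with $N(\mfc) \mid M_\f$ such that for every prime $p \not\mid M_\f$ and every prime ideal $\mfp \in \mathbb{P}$ over $p$, either $C(\mfp,\f) = 0$ or $C(\mfp^r,\f) \neq 0$ for all $r \geq 1$. We claim this same $M_\f$ works for all $\mfp \in \mathbf{P}$, which suffices.

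Let $p$ be a rational prime and let $\mfp \in \mathbf{P}$ be a prime ideal of $\mcO_K$ over $p$. Since $K/\Q$ is Galois, $\mathrm{Gal}(K/\Q)$ acts transitively on the set of prime ideals over $p$, so all of them have a common ramification index $e$ and a common inertia degree $f$, and if $g$ denotes the number of such primes, then $efg = [K:\Q] = n$. As $n$ is odd, each of $e$, $f$, $g$ is odd; in particular $f$ is odd, so $\mfp$ has odd inertia degree, i.e. $\mfp \in \mathbb{P}$. Hence $\mathbf{P} = \mathbb{P}$, and the conclusion follows at once from Proposition~\ref{HilbertProposition}.
\end{proof}
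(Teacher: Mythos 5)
Your proof is correct, but it takes a genuinely different route from the paper. The paper offers no self-contained argument for this lemma: it simply remarks that the statement is a consequence of a theorem of Bhand, Gun and Rath (\cite[Theorem 2]{BGR19}), which gives lower bounds for the Weil heights of the non-zero coefficients $C(\mfp^r,\f)$ for prime ideals $\mfp$ away from a fixed ideal, and in particular implies the non-vanishing assertion. You instead reduce the Galois case directly to Proposition~\ref{HilbertProposition}: since $K/\Q$ is Galois, all primes above a rational prime $p$ share a common ramification index $e$ and inertia degree $f$ with $efg=n$, and the oddness of $n$ forces $f$ to be odd, so every prime ideal of $\mcO_K$ lies in $\mathbb{P}$, i.e. $\mathbf{P}=\mathbb{P}$, and the very same $M_\f$ from the proposition works verbatim. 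This parity observation is sound, and it yields a shorter, self-contained proof using only the machinery already developed in the paper; what the paper's citation buys instead is a link to a stronger quantitative result (height lower bounds rather than mere non-vanishing), at the cost of importing an external theorem. Incidentally, your argument makes explicit something the paper leaves implicit: under the Galois hypothesis the restriction to odd inertia degree in Proposition~\ref{HilbertProposition} is vacuous, which is precisely why the conclusion can be stated for all of $\mathbf{P}$.
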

We note that  in a recent work of Bhand, Gun and Rath (cf.~\cite[Theorem 2]{BGR19}),
they have computed the lower bounds of the Weil heights of $C(\mfp^r,f)$, when non-zero,
for prime ideals $\mfp$ away from an ideal $\mathbf{M}$. In particular, 
the above lemma is a consequence of their Theorem.

\section*{Acknowledgements}  
 The authors are thankful to the anonymous referee for the valuable suggestions towards the improvement of this paper. 
 The first author thanks University Grants Commission (UGC), India for the financial support provided in the form of 
 Research Fellowship to carry out this research work at IIT Hyderabad. The second author's research was partially supported by the SERB grant MTR/2019/000137.

\end{document}